\newtheorem{theorem}{Theorem}[section]
\newtheorem{proposition}[theorem]{Proposition}
\newtheorem{lemma}[theorem]{Lemma}
\newtheorem*{MT}{Main Theorem}
\theoremstyle{definition}
\newtheorem{definition}[theorem]{Definition}
\newtheorem{remark}[theorem]{Remark}
\newcommand{\R}{{\mathbb R}}
\newcommand{\C}{{\mathbb C}}
\newcommand{\W}{\wedge}
\newcommand{\f}{\varphi}
\newcommand{\GL}{{\rm GL}}
\newcommand{\G}{{\rm G}}
\newcommand{\SL}{{\rm SL}}
\newcommand{\Gd}{{\rm G}_2}
\newcommand{\frg}{\mathfrak{g}}
\newcommand{\frh}{\mathfrak{h}}
\newcommand{\frn}{\mathfrak{n}}
\newcommand{\frd}{\mathfrak{d}}
\newcommand{\fre}{\mathfrak{e}}
\newcommand{\frp}{\mathfrak{p}}
\newcommand{\frs}{\mathfrak{s}}
\newcommand{\frz}{\mathfrak{z}}
\newcommand{\frr}{\mathfrak{r}}
\newcommand{\fru}{\mathfrak{u}}
\newcommand{\frsl}{\mathfrak{sl}}
\newcommand{\frso}{\mathfrak{so}}
\newcommand{\frsu}{\mathfrak{su}}
\newcommand{\aff}{\mathfrak{aff}}
\newcommand{\Der}{{\rm Der}}
\newcommand{\sst}{\scriptscriptstyle}
\numberwithin{equation}{section}
\title[Closed G$_2$-structures on non-solvable Lie groups]{Closed G$_{\mathbf2}$-structures on non-solvable Lie groups}
\author{Anna Fino and Alberto Raffero}
\subjclass[2010]{53C10, 53C30}
\keywords{closed $\G_2$-structure, non-solvable Lie group, Levi decomposition}
\thanks{The authors were supported by GNSAGA of INdAM}
\address{Dipartimento di Matematica ``G. Peano'' \\ Universit\`a degli Studi di Torino\\
Via Carlo Alberto 10\\
10123 Torino\\ Italy}
\email{annamaria.fino@unito.it, alberto.raffero@unito.it}
\begin{document}
\maketitle
\begin{abstract}
We investigate the existence of left-invariant closed G$_2$-structures on seven-dimensional non-solvable Lie groups, providing the first examples of this type. 
When the Lie algebra has trivial Levi decomposition, we show that such a structure exists only when the semisimple part is isomorphic to $\frsl(2,\R)$ and 
the radical is unimodular and centerless. Moreover, we classify unimodular Lie algebras with non-trivial Levi decomposition admitting closed G$_2$-structures. 
\end{abstract}


\section{Introduction}
A $\G_2$-structure on a seven-dimensional smooth manifold $M$ is a $\G_2$-reduction of the structure group of its frame bundle. 
It is well-known that such a reduction exists if and only if $M$ is orientable and spin, and that it  
is characterized by the existence of a 3-form $\f$ on $M$ satisfying a certain nondegeneracy condition. 
This 3-form gives rise to a Riemannian metric $g_\f$ with volume form $dV_\f$ via the identity  
\[
g_\f(X,Y)\,dV_\f = \frac16\,\iota_X\f\W\iota_Y\f\W\f,
\]
for any pair of vector fields $X,Y$ on $M.$ 

A $\G_2$-structure $\f$ is said to be {\em closed} if the defining 3-form satisfies the equation $d\f=0$, 
while it is called {\em co-closed} if $d*_\f\f=0$, $*_\f$ being the Hodge operator defined by $g_\f$ and $dV_\f$.  
When both of these conditions hold, the intrinsic torsion of the $\G_2$-structure vanishes identically, the Riemannian metric $g_\f$ is Ricci-flat, 
and $\mathrm{Hol}(g_\f)\subseteq\G_2$ (cf.~\cite{Bry,FeGr}). 

Differently from the co-closed case, where the existence of co-closed $\G_2$-structures on every seven-dimensional spin manifold was proved in \cite{CrNo}, 
general results on the existence of closed $\G_2$-structures on compact 7-manifolds are still not known.

The first example of compact manifold endowed with a closed $\G_2$-structure was constructed by Fern\'andez \cite{Fer}, 
and it consists of the compact quotient of a simply connected nilpotent Lie group by a lattice, namely a co-compact discrete subgroup. 
The classification of nilpotent Lie algebras admitting closed $\G_2$-structures was achieved by Conti and Fern\'andez \cite{CoFe}. 
As the simply connected nilpotent Lie group corresponding to each one of them admits a lattice, their result provides additional locally homogeneous compact examples. 

In the solvable non-nilpotent case, the first compact example was described in \cite{Fer1}, while a classification result for almost  
abelian Lie algebras was obtained by Freibert \cite{Fre}. 
Further solvable examples satisfying a distinguished curvature property are discussed in \cite{Bry,Lau2}. 
  
Complete closed $\G_2$-structures which are invariant under the cohomogeneity one action of a compact simple Lie group are exhibited in \cite{ClSw}. 
Recently, Podest\`a and the second named author proved that there are no compact homogeneous spaces endowed with an invariant closed (non-flat) $\G_2$-structure \cite{PoRa}. 

Remarkably, up to now nothing is known about the existence of closed $\G_2$-structures on seven-dimensional non-solvable Lie algebras.  
The aim of the present work is to investigate whether examples of this type occur.   

As almost Hermitian geometry and $\G_2$-geometry correspond  to the geometry of 1-fold and 2-fold vector cross products, respectively (cf.~\cite{Gra}), 
closed $\G_2$-structures may be regarded as the seven-dimensional analogue of almost K\"ahler structures on even-dimensional manifolds. 
Consequently, it is natural to ask whether known restrictions on the existence of symplectic forms on Lie algebras are valid for closed $\G_2$-structures, too. 

In contrast to the results on symplectic Lie algebras \cite{Chu,LicMed}, we show that there exist non-solvable unimodular Lie algebras admitting closed $\G_2$-structures.  
The unimodular case is of foremost interest when one is looking for compact examples, since being unimodular is a necessary condition for the existence of lattices \cite{Mil}.

Before stating our main result, we fix some notations used throughout the paper. 
Given a Lie algebra $\frg$ of dimension $n$, we write its structure equations with respect to a basis of $1$-forms $\{e^1,\ldots,e^n\}$  by 
specifying the $n$-tuple $(de^1,\ldots,de^n)$, $d$ being the Chevalley-Eilenberg differential of $\frg$. 
Moreover, we use the shorthand $e^{ijk\cdots}$ to denote the wedge product  $e^i\W e^j\W e^k\W\cdots$.

\begin{MT}
Up to isomorphism, the only seven-dimensional, non-solvable unimodular Lie algebras admitting a closed $\G_2$-structure are the following:
\begin{eqnarray*}
&\left(-e^{23},-2e^{12},2e^{13},0,-e^{45},\frac{1}{2}e^{46}-e^{47},\frac{1}{2}e^{47}\right);&  \\ 
&\left(-e^{23},-2e^{12},2e^{13},0,-e^{45},-\mu\,e^{46},(1+\mu)\,e^{47}\right), ~ -1<\mu\leq-\frac12;& \\ 
&\left(-e^{23},-2e^{12},2e^{13},0, -\mu\,e^{45},\frac{\mu}{2}\,e^{46}-e^{47},e^{46}+\frac{\mu}{2}\,e^{47}\right),~\mu>0;& \\ 
&\left(-e^{23},-2e^{12},2e^{13},-e^{14}-e^{25}-e^{47},e^{15}-e^{34}-e^{57},2e^{67},0\right).& 
\end{eqnarray*}
In the above list, all of the Lie algebras but the last one have a trivial Levi decomposition of the form $\frg=\frs\oplus\frr$, 
where the semisimple part is isomorphic to $\frsl(2,\R)$, and the radical $\frr$ is centerless. 
\end{MT}

The proof of the main theorem is made up by various results, which are stated and proved separately in order to make the presentation more clear and to emphasize the peculiarities 
of each case under consideration. 
In section $\S$\ref{NonSolv7}, we summarize all possible seven-dimensional non-solvable Lie algebras up to isomorphism, 
distinguishing between those with non-trivial and trivial Levi decomposition. 
Then, we deal with the various possibilities in sections $\S$\ref{STrLevi} and $\S$\ref{SNoTrLevi}. 
The proof of the theorem follows combining lemmas \ref{LemmaSemiS6}, \ref{LemmaCenterNontrivial},  
and propositions \ref{nonexistenceso3}, \ref{nonexistencesl2}, \ref{existencesl2}, \ref{UNTN} and \ref{UNTY}. 

In addition to this result, in section $\S$\ref{STrLevi} we also show that non-unimodular Lie algebras with trivial Levi decomposition cannot admit closed $\G_2$-structures. 

Having obtained the first examples of non-solvable Lie algebras admitting closed $\G_2$-structures, 
it would be interesting to study the behaviour of the Laplacian $\G_2$-flow \cite{Bry,LotWei} in this setting, in order to describe similarities and differences with the solvable case 
\cite{FeFiMa,FiRa,Lau1,Lau2}.  
We plan to do this in a subsequent work.

The computations in the proofs of propositions \ref{nonexistenceso3}, \ref{nonexistencesl2}, \ref{existencesl2}, \ref{UNTN}, \ref{UNTY} have been done with the aid of the software Maple 18 
and its package {\em difforms}.

\section{Closed $\G_2$-structures on Lie algebras}
Let $\mathfrak{g}$ be a seven-dimensional real Lie algebra.
Every 3-form $\phi\in\Lambda^3(\mathfrak{g}^*)$ on $\mathfrak{g}$ gives rise to a symmetric bilinear map 
\[
b_\phi:\mathfrak{g}\times\mathfrak{g}\rightarrow\Lambda^7(\mathfrak{g}^*),\quad (v,w) \mapsto \frac16\, \iota_v\phi\W\iota_w\phi\W\phi.
\] 
By \cite{Hit}, the $\GL(\frg)$-orbit of $\phi$ is open in $\Lambda^3(\mathfrak{g}^*)$ if and only if $\det(b_\phi)^\frac{1}{9}\in\Lambda^7(\frg^*)$ is nonzero. 
When this happens, $\phi$ is said to be {\em stable}. Using this notion, it is possible to give the following definition (cf.~\cite{CLSS,Hit}). 

\begin{definition}
A $\G_2$-structure on $\frg$ is a stable 3-form $\f\in\Lambda^3(\mathfrak{g}^*)$ for which the bilinear map $g_\f:\frg\times \frg\rightarrow\R$ defined by
\[
g_\f\coloneqq \det(b_\f)^{-\frac19}b_\f,
\] 
is positive definite. The volume form induced by $\f$ is $dV_\f=\det(b_\f)^\frac{1}{9}$. 
\end{definition}

A $\G_2$-structure $\f$ on $\frg$ is called {\em closed} if $d\f=0$, where $d$ denotes the Chevalley-Eilenberg differential of $\frg$. 
Clearly, left multiplication allows to extend any closed $\G_2$-structure $\f$ on a Lie algebra $\frg$ to a left-invariant closed $\G_2$-structure on every corresponding Lie group. 
Conversely, any left-invariant closed $\G_2$-structure $\f$ on a Lie group $\G$ is determined by the closed $\G_2$-structure $\f_{\sst{1_\G}}$ 
on $T_{\sst{1_\G}}\G\cong\frg$.

In the next lemma, we summarize some obstructions to the existence of closed $\G_2$-structures on a Lie algebra. 
The proof is an immediate consequence of the properties of the defining 3-form.
\begin{lemma}\label{obstrgen}
A seven-dimensional oriented real Lie algebra $\mathfrak{g}$ does not admit any closed $\Gd$-structure if for every closed 3-form $\phi\in\Lambda^3(\mathfrak{g}^*)$ 
one of the following conditions hold for the map $b_\phi:\mathfrak{g}\times\mathfrak{g}\rightarrow\Lambda^7(\mathfrak{g}^*)\cong\R$:
\begin{enumerate}[i)]
\item there exists $v\in\mathfrak{g}\smallsetminus\{0\}$ such that $b_\phi(v,v)=0$;
\item there exist  $v,w\in\mathfrak{g}\smallsetminus\{0\}$ such that $b_\phi(v,v)\,b_\phi(w,w)\leq0$.
\end{enumerate} 
This result does not depend on the choice of the orientation. 
\end{lemma}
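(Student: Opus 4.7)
The plan is to argue by contrapositive: assume $\frg$ admits some closed $\G_2$-structure $\f$, and deduce that $\f$ itself, viewed as a closed 3-form, violates both (i) and (ii). Since $\f$ is a specific closed 3-form on $\frg$, this suffices.

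Fix an orientation on $\frg$, which yields an identification $\Lambda^7(\frg^*)\cong\R$. Under this identification $b_\f(v,v)$ is a real number, and by the very definition of a $\G_2$-structure the equality
\[
b_\f(v,w)=g_\f(v,w)\,dV_\f=g_\f(v,w)\det(b_\f)^{\frac19}
\]
holds, where $g_\f$ is positive definite and $dV_\f=\det(b_\f)^{\frac19}$ is a nonzero scalar of fixed sign. Consequently the quadratic form $v\mapsto b_\f(v,v)$ is either strictly positive definite or strictly negative definite on $\frg$, according to the sign of $dV_\f$ with respect to the chosen orientation. In particular $b_\f(v,v)\neq0$ for every $v\neq0$ (ruling out (i)), and $b_\f(v,v)\,b_\f(w,w)>0$ for every pair of nonzero vectors $v,w$ (ruling out (ii)). This proves the implication.

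Finally, the orientation-independence is immediate: reversing the orientation amounts to replacing the identification $\Lambda^7(\frg^*)\cong\R$ by its negative, which simultaneously multiplies every value $b_\phi(v,v)$ by $-1$. Thus the vanishing condition in (i) is unaffected, and the sign of the product $b_\phi(v,v)\,b_\phi(w,w)$ in (ii) is preserved as well, so neither condition depends on the chosen orientation.

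Nothing here is subtle; the only point to be careful about is recording that the hypothesis of the lemma quantifies over \emph{all} closed 3-forms, so having a single counterexample closed 3-form (namely a closed $\G_2$-structure itself) already contradicts the hypothesis. No routine calculation is required and no real obstacle arises.
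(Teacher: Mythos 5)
Your argument is correct and is precisely the one the paper intends: the paper gives no detailed proof, stating only that the lemma "is an immediate consequence of the properties of the defining 3-form," and your contrapositive via $b_\f = g_\f\, dV_\f$ with $g_\f$ positive definite is exactly that consequence. The orientation-independence remark is also handled correctly.
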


Further obstructions can be obtained exploiting known non-existence results for symplectic forms on even-dimensional Lie algebras (e.g.~\cite{Chu,LicMed}) and the following. 
\begin{proposition}[\cite{CoFe}]\label{ClosedSympl}
Assume that $\frg$ has non-trivial center and that there exists a closed $\G_2$-structure $\f$ on it. 
If $\pi:\frg\rightarrow\frh$ is a Lie algebra epimorphism with kernel contained in the center, and $\dim(\frh)=6$, then $\frh$ admits a symplectic form given by $\iota_\xi\f$, 
where $\langle\xi\rangle=\ker(\pi)$. 
\end{proposition}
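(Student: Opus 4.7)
The strategy is to set $\omega := \iota_\xi \f$ for the (nonzero, central) generator $\xi$ of $\ker(\pi)$, and verify that this 2-form descends to a symplectic form on $\frh$. Since $\pi$ is surjective with one-dimensional kernel (because $\dim\frg=7$, $\dim\frh=6$), the hypothesis gives $\ker(\pi)=\langle\xi\rangle$ with $0\neq\xi$ in the center of $\frg$. Recall that $\pi^*\colon\Lambda^{\bullet}(\frh^*)\to\Lambda^{\bullet}(\frg^*)$ is injective and identifies $\Lambda^{\bullet}(\frh^*)$ with the subspace of forms annihilated by $\iota_\xi$. Since $\iota_\xi\omega=\iota_\xi\iota_\xi\f=0$, there is a unique $\bar\omega\in\Lambda^2(\frh^*)$ with $\pi^*\bar\omega=\omega$, so the only remaining issues are closedness and non-degeneracy of $\bar\omega$.

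For closedness I would use Cartan's formula in the Chevalley--Eilenberg complex,
\[
\mathcal{L}_\xi \;=\; d\circ\iota_\xi + \iota_\xi\circ d,
\]
with $(\mathcal{L}_\xi\alpha)(X_1,\dots,X_k)=-\sum_i\alpha(X_1,\dots,[\xi,X_i],\dots,X_k)$. Centrality of $\xi$ makes $\mathcal{L}_\xi$ identically zero on $\Lambda^{\bullet}(\frg^*)$, so closedness of $\f$ gives $d\omega=d\iota_\xi\f=\mathcal{L}_\xi\f-\iota_\xi d\f=0$. Pulling this identity back, injectivity of $\pi^*$ yields $d\bar\omega=0$.

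For non-degeneracy I would invoke the pointwise identity
\[
(\iota_v\f)^3 \;=\; 6\,g_\f(v,v)\,\iota_v dV_\f,\qquad v\in\frg,
\]
which follows from transitivity of $\G_2$ on the unit sphere of $(\frg,g_\f)$ by a one-line check in the standard basis (with $v=e_1$ one gets $\iota_v\f=e^{23}+e^{45}+e^{67}$, whose cube equals $6\,e^{234567}=6\,\iota_v dV_\f$). Taking $v=\xi$: positive-definiteness of $g_\f$ together with $\xi\neq 0$ forces the right-hand side to be nonzero in $\Lambda^6(\frg^*)$, hence $\omega^3\neq 0$. Since $\pi^*(\bar\omega^3)=\omega^3$ and $\pi^*$ is injective on $\Lambda^6$, we conclude $\bar\omega^3\neq 0$ on $\frh$, i.e., $\bar\omega$ is non-degenerate.

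The only step of genuine content is this last pointwise identity, and it is classical linear algebra for $\G_2$ 3-forms; everything else is formal manipulation in the exterior algebra of $\frg^*$, so I expect no substantive obstacle.
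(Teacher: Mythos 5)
Your proposal is correct, and it is essentially the standard argument: the paper itself states this proposition without proof, citing Conti--Fern\'andez \cite{CoFe}, where the same two ingredients appear --- closedness of $\iota_\xi\f$ via the Cartan formula and centrality of $\xi$, and non-degeneracy from the pointwise fact that $\iota_v\f$ has rank six for every $v\neq0$ (which your cube identity $(\iota_v\f)^3=6\,g_\f(v,v)\,\iota_v dV_\f$ establishes). No gaps.
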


\section{Non-solvable seven-dimensional Lie algebras}\label{NonSolv7}
Consider a seven-dimensional real Lie algebra $\frg$ and denote by $\frr$ its radical, i.e., its maximal solvable ideal. 
When $\frg$ is neither semisimple nor solvable, it is well-known that there exists a semisimple subalgebra $\frs\subseteq\frg$ such that $\frg$ is a semidirect product 
\begin{equation}\label{LeviDec}
\frg=\frs\oplus_\rho\frr,
\end{equation}
for a suitable homomorphism $\rho:\frs\rightarrow\Der(\frr)$ (see e.g.~\cite[Thm.~B.2]{Kna}). 
The decomposition \eqref{LeviDec} is called {\em Levi decomposition} of $\frg$. 
If $\rho$ is the zero map, $\frg$ reduces to the product algebra $\frg=\frs\oplus\frr$, and the Levi decomposition is said to be {\em trivial}. 

By \cite{Tur}, there are seven non-isomorphic irreducible Lie algebras of dimension seven with non-trivial Levi decomposition. 
We recall their structure equations with respect to a basis $\left\{e^1,\ldots,e^7\right\}$ of their dual algebra in Table \ref{tabLnt} (cf.~\cite[Table II]{Tur}). 
\begin{table}[ht]
\centering
\renewcommand\arraystretch{1.4}
\adjustbox{max width=\textwidth}{
\begin{tabular}{|c|c|}
\hline
$\frg$				& 	$\left(de^1,de^2,de^3,de^4,de^5,de^6,de^7\right)$					 	\\ \hline \hline
$L_{7,1}$				&	$\left(-e^{23},e^{13},-e^{12},-e^{26}+e^{35}-e^{47},e^{16}-e^{34}-e^{57},-e^{15}+e^{24}-e^{67},0\right)$					\\ \hline    

$L_{7,2}$				&	$\left(-e^{23},e^{13},-e^{12},\frac12 e^{17}+\frac12 e^{25}+\frac12 e^{36},\frac12 e^{16}-\frac12 e^{24}-\frac12 e^{37}, 
								-\frac12 e^{15}+\frac12 e^{27}-\frac12 e^{34},-\frac12 e^{14}-\frac12 e^{26}+\frac12 e^{35}\right)$				\\ \hline

$L^a_{7,3}$			&	$\left(-e^{23},-2e^{12},2e^{13},-e^{14}-e^{25}-e^{47},e^{15}-e^{34}-e^{57},-a\,e^{67},0\right),\quad a\neq0$				\\ \hline
						
$L_{7,4}$				&	$\left(-e^{23},-2e^{12},2e^{13},-e^{14}-e^{25}-e^{47},e^{15}-e^{34}-e^{57},-e^{45}-2e^{67},0\right)$						\\ \hline

$L_{7,5}$				&	$\left(-e^{23},-2e^{12},2e^{13},-2e^{14}-2e^{25}-e^{47},-e^{26}-e^{34}-e^{57},2e^{16}-2e^{35}-e^{67},0\right)$				\\ \hline

$L_{7,6}$				&	$\left(-e^{23},-2e^{12},2e^{13},-3e^{14}-3e^{25},-e^{15}-2e^{26}-e^{34},e^{16}-e^{27}-2e^{35},3e^{17}-3e^{36}\right)$		\\ \hline

$L_{7,7}$				&	$\left(-e^{23},-2e^{12},2e^{13},-e^{14}-e^{25},e^{15}-e^{34},-e^{16}-e^{27},e^{17}-e^{36}\right)$							\\ \hline
\end{tabular}}
\vspace{0.1cm}
\caption{Seven-dimensional irreducible Lie algebras with non-trivial Levi decomposition.}\label{tabLnt}
\end{table}
\renewcommand\arraystretch{1}

\begin{remark}\label{RemUnimNonTrLevi}
It is straightforward to check that the only unimodular Lie algebras appearing in Table \ref{tabLnt} are $L_{7,2}$, $L^{-2}_{7,3}$, $L_{7,6}$, and $L_{7,7}$.
\end{remark}

In addition to the Lie algebras summarized in Table \ref{tabLnt}, there exist four non-isomorphic unimodular reducible Lie algebras of dimension seven with non-trivial Levi decomposition. 
They are given by the product of the unimodular Lie algebras appearing in \cite[Table I]{Tur}, namely $L_{5,1}$, $L_{6,1}$, $L_{6,2}$, $L_{6,4}$,  
and the abelian Lie algebra of suitable dimension. 
We collect their structure equations in Table \ref{tabLnt2}.

\begin{table}[ht]
\centering
\renewcommand\arraystretch{1.4}
\adjustbox{max width=\textwidth}{
\begin{tabular}{|c|c|}
\hline
$\frg$				& 	$\left(de^1,de^2,de^3,de^4,de^5,de^6,de^7\right)$					 	\\ \hline \hline
$L_{5,1}\oplus\R^2$		&	$\left(-e^{23},-2e^{12},2e^{13}, -e^{14}-e^{25}, e^{15}-e^{34}, 0, 0\right)$					\\ \hline    

$L_{6,1}\oplus\R$		&	$\left(-e^{23},e^{13},-e^{12}, -e^{26}+e^{35}, e^{16}-e^{34}, -e^{15}+e^{24}, 0\right)$				\\ \hline
				
$L_{6,2}\oplus\R$		&	$\left(-e^{23},-2e^{12},2e^{13}, -e^{14}-e^{25}, e^{15}-e^{34}, -e^{45}, 0\right)$						\\ \hline

$L_{6,4}\oplus\R$		&	$\left(-e^{23},-2e^{12},2e^{13}, -2e^{14}-2e^{25}, -e^{26}-e^{34}, 2 e^{16}- 2 e^{35}, 0\right)$		\\ \hline
\end{tabular}}
\vspace{0.1cm}
\caption{Seven-dimensional unimodular reducible Lie algebras with non-trivial Levi decomposition.}\label{tabLnt2}
\end{table}
\renewcommand\arraystretch{1}

If the Levi decomposition $\frg=\frs\oplus\frr$ is trivial, then either $\dim(\frs)=3$ or $\dim(\frs)=6$.  
Indeed, it is known that a semisimple Lie algebra has dimension at least three, and that there are no semisimple Lie algebras of dimension four, five and seven. 

Recall that there exist only two non-isomorphic three-dimensional semisimple Lie algebras, 
namely $\frsl(2,\R)$ and $\frso(3)\cong\frsu(2)$. With respect to a suitable basis $\left\{e^1,e^2,e^3\right\}$ of their dual algebras, their structure equations are 
\begin{eqnarray}
\frsl(2,\R)	&=&	\left(-e^{23},-2e^{12},2e^{13}\right),\label{sl2}\\
\frso(3)	&=&	\left(-e^{23},e^{13},-e^{12}\right).\label{so3}
\end{eqnarray}

Up to isomorphism, a semisimple Lie algebra $\frs$ of dimension six is either $\frso(3,1)$ or a product $\frs=\frs'\oplus\frs''$, where  
the summands $\frs'$ and $\frs''$ are three-dimensional and semisimple.

\section{Non-solvable Lie algebras with trivial Levi decomposition}\label{STrLevi}
In this section, we investigate the existence of closed $\G_2$-structures on seven-dimensional non-solvable Lie algebras with trivial Levi decomposition. 
As observed in section $\S$\ref{NonSolv7}, we have to consider Lie algebras of the form $\frg=\frs\oplus\frr$, with $\dim(\frs)=3$ or $\dim(\frs)=6$. 

We claim that the case $\dim(\frs)=6$ can be excluded using Proposition \ref{ClosedSympl} together with a result by Chu \cite[Thm.~8]{Chu}.  
We recall it in the next theorem, giving an alternative shorter proof. 
\begin{theorem}[\cite{Chu}]\label{ChuSympSemi}
Any semisimple Lie group has no left-invariant symplectic structure. 
\end{theorem}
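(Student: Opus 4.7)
The plan is to reduce the claim to a statement about the Chevalley--Eilenberg complex of the Lie algebra. A left-invariant symplectic structure on a Lie group $\G$ with Lie algebra $\frg$ corresponds bijectively to a non-degenerate $\omega\in\Lambda^2\frg^*$ which is closed for the Chevalley--Eilenberg differential, i.e.\ $\omega([X,Y],Z)+\omega([Y,Z],X)+\omega([Z,X],Y)=0$ for all $X,Y,Z\in\frg$. Hence the task becomes: show that a semisimple Lie algebra $\frg$ admits no non-degenerate closed element of $\Lambda^2\frg^*$.

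The first step is to invoke Whitehead's second lemma, which asserts that $H^2(\frg,\R)=0$ for every semisimple Lie algebra over $\R$. Consequently, any closed $\omega\in\Lambda^2\frg^*$ is exact: there exists $\alpha\in\frg^*$ such that $\omega=d\alpha$, so that $\omega(X,Y)=-\alpha([X,Y])$ for all $X,Y\in\frg$. If $\omega$ is non-degenerate then $\omega\neq0$, and hence $\alpha\neq0$.

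The second step exploits the non-degeneracy of the Killing form $B$ of $\frg$, which is guaranteed by the semisimplicity assumption. Using $B$ as a duality, one picks the unique element $X_\alpha\in\frg$ satisfying $\alpha(Y)=B(X_\alpha,Y)$ for every $Y\in\frg$; since $\alpha\neq0$, we have $X_\alpha\neq0$. The $\ad$-invariance of $B$ then gives, for every $Y\in\frg$,
\[
\omega(X_\alpha,Y)=-\alpha([X_\alpha,Y])=-B(X_\alpha,[X_\alpha,Y])=-B([X_\alpha,X_\alpha],Y)=0,
\]
so that $X_\alpha$ lies in the kernel of $\omega$, contradicting non-degeneracy.

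There is no genuine obstacle: once Whitehead's lemma is combined with the existence of an invariant non-degenerate bilinear form on $\frg$, the claim follows by a one-line computation. The appeal of this argument, compared with Chu's original proof, is precisely that the cohomological vanishing reduces a global structural question about $2$-forms to the pointwise observation that an element and its image under a skew form built from an invariant symmetric pairing must be orthogonal.
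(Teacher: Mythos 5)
Your argument is correct. It is, however, organized around a different cohomological input than the paper's proof: you invoke Whitehead's second lemma, $H^2(\frg,\R)=0$, to write $\omega=d\alpha$ and then dualize the $1$-form $\alpha$ via the Killing form, whereas the paper dualizes $\omega$ itself into a $\mathcal B$-skew-symmetric endomorphism $F$ with $\omega(v,w)=\mathcal B(Fv,w)$, observes that $d\omega=0$ forces $F$ to be a derivation, and then uses the fact that all derivations of a semisimple Lie algebra are inner (essentially Whitehead's first lemma for the adjoint module) to write $F=\mathrm{ad}_z$ and exhibit $z$ in the kernel of $\omega$. The two routes are tightly parallel --- indeed, unwinding your construction via the $\mathrm{ad}$-invariance of $B$ shows that your $X_\alpha$ and the paper's $z$ are the same degenerate direction up to sign --- but each buys something slightly different: the paper's version needs only the non-degeneracy of the Killing form plus ``derivations are inner,'' and makes the degenerate vector appear as the image of $\omega$ under the duality; yours front-loads the stronger-sounding vanishing $H^2(\frg,\R)=0$ and then reduces everything to the one-line orthogonality $B(X_\alpha,[X_\alpha,Y])=0$. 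Either packaging is a complete proof; just make sure you cite Whitehead's second lemma for trivial real coefficients, which is the precise form you use.
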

\begin{proof}
As we are considering left-invariant symplectic structures on Lie groups, it is sufficient to prove the assertion for semisimple Lie algebras.  
Let $\frp$ be a semisimple Lie algebra of even dimension, and assume that there exists a symplectic form $\omega\in\Lambda^2(\frp^*)$ on it. 
Denote by $\mathcal{B}$ the Cartan-Killing form of $\frp$. Since it is non-degenerate, there exists a $\mathcal{B}$-skew-symmetric endomorphism $F\in\mathrm{End}(\frp)$ such that 
$\omega(v,w)=\mathcal{B}(Fv,w)$ for all $v,w\in\frp$. The closedness of $\omega$ implies that $F$ is a derivation of $\frp$, i.e., $F[v,w] = [Fv,w]+[v,Fw]$ for all $v,w\in\frp$. 
As $\frp$ is semisimple, there exists some nonzero $z\in\frp$ such that $F=\mathrm{ad}_z$. 
Consequently, $\omega(z,v)=0$ for all $v\in\frp$. This is clearly in contrast with the nondegeneracy of $\omega$. 
\end{proof}

We now show our claim. 
\begin{lemma}\label{LemmaSemiS6}
If the Levi decomposition $\frg=\frs\oplus\frr$ is trivial, and $\dim(\frs)=6$, then $\frg$ does not admit any closed $\G_2$-structure.  
\end{lemma}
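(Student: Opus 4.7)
The plan is to combine the dimension constraint with Proposition \ref{ClosedSympl} and Theorem \ref{ChuSympSemi} in a very short argument.

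First I would observe that since $\dim(\frg)=7$ and $\dim(\frs)=6$, the radical $\frr$ is one-dimensional, hence abelian. Because the Levi decomposition is trivial, $\frs$ acts trivially on $\frr$, so $[\frs,\frr]=0$; combined with $[\frr,\frr]=0$ this shows that $\frr$ is contained in the center of $\frg$. In particular, $Z(\frg)\neq\{0\}$ and $\frg/\frr\cong\frs$ is a six-dimensional semisimple Lie algebra.

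Next, suppose for contradiction that $\frg$ admits a closed $\G_2$-structure $\f$. Let $\xi$ be a generator of $\frr$ and let $\pi:\frg\to\frs$ be the projection onto the Levi factor; its kernel is $\R\xi\subseteq Z(\frg)$ and the target has dimension six. Proposition \ref{ClosedSympl} then provides a symplectic form on $\frs$, namely $\iota_\xi\f$. This directly contradicts Theorem \ref{ChuSympSemi}, which rules out symplectic structures on any semisimple Lie algebra. Hence no such $\f$ can exist.

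There is essentially no obstacle here: once the dimension count forces $\frr$ to lie in the center, Proposition \ref{ClosedSympl} reduces the non-existence statement for closed $\G_2$-structures on $\frg$ to the non-existence of symplectic forms on the six-dimensional semisimple quotient, which is exactly the content of Chu's theorem as reproved above. The only minor point to verify carefully is that the hypotheses of Proposition \ref{ClosedSympl} are met, i.e.\ that $\ker(\pi)\subseteq Z(\frg)$ and $\dim(\frs)=6$, both of which follow from the triviality of the Levi decomposition and the dimension of $\frg$.
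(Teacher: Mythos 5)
Your argument is correct and follows exactly the paper's own proof: identify the one-dimensional radical as central, apply Proposition \ref{ClosedSympl} to the quotient map onto the six-dimensional semisimple factor, and contradict Theorem \ref{ChuSympSemi}. The only difference is that you spell out the verification that $\frr\subseteq Z(\frg)$, which the paper leaves implicit.
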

\begin{proof}
We have $\frg=\frs\oplus\R$. 
Assume that $\frg$ admits a closed $\G_2$-structure $\f$, and let $\xi$ be a generator of the radical $\frr=\R$. 
Then, the 2-form $\iota_\xi\f$ is a symplectic form on $\frs$ by Proposition \ref{ClosedSympl}. This contradicts Theorem \ref{ChuSympSemi}. 
\end{proof}

Hence, we are left with the case $\frg=\frs\oplus\frr$, with $\dim(\frs)=3$ and $\dim(\frr)=4$. 
To deal with it, we begin recalling the classification of four-dimensional solvable Lie algebras obtained in \cite{Mub} (see also \cite{AnBaDoOv}). 
\begin{theorem}[\cite{AnBaDoOv,Mub}]\label{solvable4}
Let $\frr$ be a four-dimensional solvable Lie algebra. Then, $\frr$ is isomorphic to one and only one of the following:
\begin{eqnarray*}
\R^4					&=&	\left(0,0,0,0\right),\\
\aff(\R)\oplus\aff(\R)		&=&	\left(0,-e^{12},0,-e^{34}\right),\\
\R\oplus\frh_3			&=& \left(0,0,0,-e^{23}\right),\\
\R\oplus\frr_3			&=& \left(0,0,-e^{23}-e^{24},-e^{24}\right),\\
\R\oplus\frr_{3,\lambda}	&=& \left(0,0,-e^{23},-\lambda e^{24}\right),\quad |\lambda|\leq1,\\
\R\oplus\frr'_{3,\lambda}	&=& \left(0,0,-\lambda e^{23}-e^{24},e^{23}-\lambda e^{24}\right),\quad \lambda\geq0,\\
\frn_4				&=& \left(0,0,-e^{12},-e^{13}\right),\\
\aff(\C)				&=& \left(0,0,-e^{13}+e^{24},-e^{14}-e^{23}\right),\\
\frr_4					&=& \left(0,-e^{12}-e^{13},-e^{13}-e^{14},-e^{14}\right),\\
\frr_{4,\lambda}			&=& \left(0,-e^{12},-\lambda e^{13}-e^{14},-\lambda e^{14}\right),\\
\frr_{4,\mu,\lambda}		&=& \left(0,-e^{12},-\mu e^{13},-\lambda e^{14}\right),~\lambda\mu\neq0,~-1<\mu\leq\lambda\leq1~or~-1=\mu\leq\lambda<0,\\
\frr'_{4,\mu,\lambda}		&=& \left(0,-\mu e^{12},-\lambda e^{13}-e^{14},e^{13}-\lambda e^{14}\right),\quad \mu>0,\\
\frd_4				&=& \left(0,-e^{12},e^{13},-e^{23}\right),\\
\frd_{4,\lambda}		&=& \left(0,-\lambda e^{12},(\lambda-1)e^{13},-e^{14}-e^{23}\right),\quad \lambda\geq\frac12,\\
\frd'_{4,\lambda}		&=& \left(0,-\lambda e^{12}-e^{13},e^{12}-\lambda e^{13},-2\lambda e^{14}-e^{23}\right),\quad \lambda\geq0,\\
\frh_4				&=& \left(0,-e^{12}-e^{13},-e^{13},-2e^{14}-e^{23}\right).
\end{eqnarray*}
In the above list, the unimodular Lie algebras are the following: $\R^4$, $\R\oplus\frh_3$, $\R\oplus\frr_{3,-1}$, $\R\oplus\frr'_{3,0}$, $\frn_4$, $\frr_{4,-\frac12}$, 
$\frr_{4,\mu,-1-\mu}$ with $-1<\mu\leq-\frac12$, $\frr'_{4,\mu,-\frac{\mu}{2}}$ with $\mu>0$, $\frd_4$, $\frd'_{4,0}$. 
\end{theorem}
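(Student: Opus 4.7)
The plan is to proceed by a careful case analysis organized around two structural invariants: the dimension of the nilradical $\frn$ of $\frr$, and whether $\frr$ splits off a central $\R$-factor. Since $\frr$ is solvable of dimension $4$, standard structure theory gives $\dim(\frn)\in\{2,3,4\}$ (Lie's theorem forces $\dim(\frn)\geq \frac{1}{2}\dim[\frr,\frr]+\dim[\frr,\frr]$ considerations rule out smaller nilradicals for solvable algebras over $\R$ of this dimension, and in any case $[\frr,\frr]\subseteq\frn$). If $\dim(\frn)=4$ then $\frr$ itself is nilpotent and the classification reduces to the known list of $4$-dimensional real nilpotent Lie algebras ($\R^4$, $\R\oplus\frh_3$, $\frn_4$). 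Otherwise pick a complementary subspace $V$ of dimension $4-\dim(\frn)$ and study the induced representation $\mathrm{ad}|_V:V\to\Der(\frn)$.

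The key computational step is the classification of derivation algebras of the possible nilradicals $\R^k$, $\frh_3$, $\frn_3$, etc., and then the reduction of $\mathrm{ad}|_V$ to a normal form under the action of automorphisms of $\frn$ and $\GL(V)$. For $\dim(\frn)=3$ with $V=\R\cdot e_1$, I would bring the derivation $D=\mathrm{ad}_{e_1}$ into a preferred canonical form (over $\R$, either a real Jordan form with parametrized eigenvalues, or a pair of complex-conjugate eigenvalues); the freedom to rescale $e_1$ normalizes one nonzero eigenvalue, producing the families $\frr_{4,\mu,\lambda}$, $\frr_{4,\lambda}$, $\frr_{4}$, $\frr'_{4,\mu,\lambda}$, $\frd_{4,\lambda}$, $\frd'_{4,\lambda}$, $\frh_4$, and $\aff(\C)$. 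When $\frr$ itself decomposes as $\R\oplus\frs$ with $\dim(\frs)=3$, one recovers the $\R\oplus\frr_3$, $\R\oplus\frr_{3,\lambda}$, $\R\oplus\frr'_{3,\lambda}$ families from the standard classification of $3$-dimensional solvable Lie algebras. The Lie algebra $\aff(\R)\oplus\aff(\R)$ appears as the unique reducible case with two non-trivial $2$-dimensional summands.

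The unimodular subclassification is then a direct calculation: unimodularity means $\mathrm{tr}(\mathrm{ad}_x)=0$ for all $x\in\frr$, which, using the structure equations, reduces to polynomial conditions on the parameters $\lambda,\mu$. Reading off traces from the stated brackets gives $\lambda=-\frac{1}{2}$ for $\frr_{4,\lambda}$, $\lambda=-1-\mu$ for $\frr_{4,\mu,\lambda}$, $\lambda=-\frac{\mu}{2}$ for $\frr'_{4,\mu,\lambda}$, $\lambda=0$ for $\frd_{4,\lambda}$ and $\frd'_{4,\lambda}$, and so on, reproducing the stated list. The nilpotent examples $\R^4$, $\R\oplus\frh_3$, $\frn_4$ are automatically unimodular.

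The main obstacle is not the production of a candidate list but the verification of minimality: one must show that different parameter values (within the stated ranges) really give non-isomorphic algebras, and that no two entries from different families are isomorphic to each other. This is typically done by computing discrete invariants that distinguish them, such as $\dim[\frr,\frr]$, $\dim[\frr,[\frr,\frr]]$, the isomorphism type of the nilradical, unimodularity, the Jordan type of $\mathrm{ad}_x$ for $x\in\frr\setminus\frn$ modulo rescaling, and the existence or non-existence of a center. The ranges $|\lambda|\leq 1$, $-1<\mu\leq\lambda\leq 1$, $\mu>0$, $\lambda\geq 0$, $\lambda\geq \tfrac{1}{2}$ appearing in the statement are precisely the fundamental domains for the action of these rescaling symmetries, and their tightness is what makes the bookkeeping delicate. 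Since this is a classical result of Mubarakzyanov subsequently reorganized by Andrada--Barberis--Dotti--Ovando, for the present paper I would simply invoke \cite{Mub,AnBaDoOv} rather than reprove it.
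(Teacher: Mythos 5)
The paper offers no proof of this theorem at all: it is imported verbatim from Mubarakzjanov \cite{Mub} as reorganized by Andrada--Barberis--Dotti--Ovando \cite{AnBaDoOv}, and the citation in the theorem header is the entire justification. Your closing decision to invoke those references rather than reprove the classification is therefore exactly what the paper does, and your sketch of the underlying method (stratify by the dimension of the nilradical, normalize $\mathrm{ad}|_V$ in $\Der(\frn)$ up to $\mathrm{Aut}(\frn)\times\GL(V)$, then read off unimodularity from traces) is the standard route taken in those references.

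Two small points in the sketch deserve correction, though neither affects the outcome since you defer to the citation. First, the inequality you attribute to Lie's theorem for bounding $\dim(\frn)$ from below is garbled as written; the fact you want is simply that the nilradical of an $n$-dimensional solvable Lie algebra has dimension at least $n/2$ (together with $[\frr,\frr]\subseteq\frn$), which gives $\dim(\frn)\in\{2,3,4\}$ for $n=4$. Second, your trace computation for $\frd_{4,\lambda}$ is wrong: from the structure equations $\left(0,-\lambda e^{12},(\lambda-1)e^{13},-e^{14}-e^{23}\right)$ one gets $\mathrm{tr}(\mathrm{ad}_{e_1})=\lambda+(1-\lambda)+1=2$ for every $\lambda$, so $\frd_{4,\lambda}$ is never unimodular --- the condition is not ``$\lambda=0$'' (which in any case lies outside the range $\lambda\geq\frac12$). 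This is consistent with the theorem's unimodular list, which contains no $\frd_{4,\lambda}$; the analogous computation for $\frd'_{4,\lambda}$, giving $\mathrm{tr}(\mathrm{ad}_{e_1})=4\lambda$ and hence $\lambda=0$, is the one you stated correctly.
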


Using the next theorem, we can exclude the existence of closed $\G_2$-structures on Lie algebras with trivial Levi decomposition and whose radical has non-trivial center. 
\begin{theorem}[\cite{Chu}]\label{ChuSympLeviTriv}
Suppose that the Lie algebra of a symplectic Lie group $\mathrm{H}$ has trivial Levi decomposition. Then, $\mathrm{H}$ is solvable. 
\end{theorem}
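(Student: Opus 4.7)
The plan is to work at the Lie algebra level and derive the conclusion by contradiction, reducing to the semisimple case covered by Theorem \ref{ChuSympSemi}. Write $\frh=\frs\oplus\frr$ for the trivial Levi decomposition of the Lie algebra of $\mathrm{H}$, and identify the left-invariant symplectic structure with a closed nondegenerate $2$-form $\omega\in\Lambda^2(\frh^*)$. Assuming for contradiction that $\frs\neq0$, the strategy is to produce an induced symplectic form on $\frs$ itself, contradicting the preceding theorem.

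The key step is to show that the direct sum decomposition is $\omega$-orthogonal, namely $\omega(\frs,\frr)=0$. For this I would exploit the cocycle identity
\[
\omega([X,Y],Z)-\omega([X,Z],Y)+\omega([Y,Z],X)=0,
\]
which arises from $d\omega=0$, evaluated on triples $X\in\frr$, $Y,Z\in\frs$. Triviality of the Levi decomposition forces $[\frs,\frr]=0$, so the first two terms vanish and one obtains $\omega([Y,Z],X)=0$ for all such $X,Y,Z$. Since $\frs$ is semisimple, and in particular perfect (so that $\frs=[\frs,\frs]$), this at once yields $\omega(\frs,\frr)=0$.

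The remainder is a dimension count. From $\omega(\frs,\frr)=0$ and nondegeneracy of $\omega$ one deduces that the $\omega$-orthogonal complement of $\frs$ contains $\frr$ and has dimension $\dim\frh-\dim\frs=\dim\frr$, so it equals $\frr$; in particular the restriction $\omega|_{\frs\times\frs}$ is nondegenerate on $\frs$. Because $\frs$ is a subalgebra, the Chevalley-Eilenberg differential of the restricted form is the restriction of $d\omega$, which vanishes, so $\omega|_{\frs\times\frs}$ is also closed. This endows the semisimple algebra $\frs$ with a symplectic form, contradicting Theorem \ref{ChuSympSemi}, and forcing $\frs=0$.

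I expect the only genuinely substantive step to be the orthogonality $\omega(\frs,\frr)=0$, which rests on the two defining ingredients of the setting: the vanishing of mixed brackets coming from the trivial Levi decomposition, and the perfectness of $\frs$ coming from semisimplicity. Once orthogonality is in hand, the reduction to the previously proved semisimple case is automatic, with no further structure theory required.
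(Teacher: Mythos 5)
Your argument is correct. Note that the paper does not actually prove this statement: it is quoted from Chu's paper \cite{Chu} and used as a black box, so there is no in-paper proof to compare against. Your proof is a clean, self-contained reduction to Theorem \ref{ChuSympSemi}, which the paper \emph{does} prove, and every step checks out: triviality of the Levi decomposition gives $[\frs,\frr]=0$, so the cocycle identity applied to $X\in\frr$, $Y,Z\in\frs$ yields $\omega([Y,Z],X)=0$, and perfectness of the semisimple part ($\frs=[\frs,\frs]$) upgrades this to $\omega(\frs,\frr)=0$; the dimension count then shows $\frs^{\perp_\omega}=\frr$, so $\omega|_{\frs\times\frs}$ is a nondegenerate closed $2$-form on the subalgebra $\frs$ (in particular $\frs$ would have to be even-dimensional), contradicting Theorem \ref{ChuSympSemi} unless $\frs=0$. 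This is essentially Chu's original line of reasoning, and it fits naturally with the paper's decision to reprove the semisimple case directly; supplying this second step would make the paper's treatment of the trivial Levi decomposition entirely self-contained.
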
 
More in detail, if $\f$ is a closed $\G_2$-structure on $\frg=\frs\oplus\frr$ and $\xi\in\frz(\frr)$ is a nonzero vector in the center of $\frr$, 
then $\frr'\coloneqq\frr\slash\langle\xi\rangle$ is still solvable and the 
contraction $\iota_\xi\f$ defines a symplectic form on $\frs\oplus\frr'$ by Proposition \ref{ClosedSympl}. This is clearly not possible unless $\frs=\{0\}$ by Theorem \ref{ChuSympLeviTriv}. 
Consequently, we have the following. 
\begin{lemma}\label{LemmaCenterNontrivial}
Let $\frg=\frs\oplus\frr$ with $\dim(\frs)=3$, and assume that the center $\frz(\frr)$ of $\frr$ is non-trivial. Then, $\frg$ does not admit any closed $\G_2$-structure. 
\end{lemma}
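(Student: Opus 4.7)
The plan is to combine Proposition \ref{ClosedSympl} with Theorem \ref{ChuSympLeviTriv}, exactly as anticipated in the paragraph preceding the lemma statement. The strategy is a proof by contradiction: assume $\f$ is a closed $\G_2$-structure on $\frg=\frs\oplus\frr$, pick a nonzero $\xi\in\frz(\frr)$, and produce from $\xi$ a symplectic form on a 6-dimensional Lie algebra with trivial, non-trivial Levi decomposition, violating Theorem \ref{ChuSympLeviTriv}.

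First I would verify the key observation that $\xi$ sits in the center of the whole algebra $\frg$, not merely of $\frr$. This is where the triviality of the Levi decomposition is used: the direct sum $\frg=\frs\oplus\frr$ means $[\frs,\frr]=0$, so together with $[\frr,\xi]=0$ we get $\xi\in\frz(\frg)$. Consequently, the natural projection $\pi:\frg\to\frh:=\frg/\langle\xi\rangle$ is a Lie algebra epimorphism onto a 6-dimensional quotient whose kernel is contained in $\frz(\frg)$, so the hypothesis of Proposition \ref{ClosedSympl} is met.

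Next, I would read off the structure of $\frh$: since $\langle\xi\rangle\subseteq\frr$, the Levi decomposition of $\frh$ is again trivial, of the form $\frh=\frs\oplus(\frr/\langle\xi\rangle)$, with semisimple part $\frs$ of dimension $3$. In particular, $\frh$ is not solvable. Applying Proposition \ref{ClosedSympl}, the 2-form $\iota_\xi\f$ descends to a symplectic form on $\frh$. This contradicts Theorem \ref{ChuSympLeviTriv}, which asserts that a Lie algebra with trivial Levi decomposition admitting a symplectic form must be solvable.

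There is no real obstacle: the argument is a short assembly of results already proved or cited in the paper. The only subtle point is checking that $\xi\in\frz(\frg)$ so that Proposition \ref{ClosedSympl} truly applies, but this is immediate from the triviality of the Levi decomposition. The conclusion then follows, finishing the proof.
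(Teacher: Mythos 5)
Your proof is correct and follows exactly the paper's own argument: contract $\f$ with a nonzero central element $\xi$ of $\frr$ (which is central in $\frg$ by triviality of the Levi decomposition) and apply Proposition \ref{ClosedSympl} to get a symplectic form on the non-solvable quotient $\frs\oplus(\frr/\langle\xi\rangle)$, contradicting Theorem \ref{ChuSympLeviTriv}. Your explicit verification that $\xi\in\frz(\frg)$ is a point the paper leaves implicit, but the route is the same.
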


A computation on a case-by-case basis allows to establish which four-dimensional solvable Lie algebras have non-trivial center. 
\begin{lemma}\label{centerrad}
The four-dimensional solvable Lie algebras with non-trivial center are $\R^4$, $\R\oplus\frh_3$, $\R\oplus\frr_3$, $\R\oplus\frr_{3,\lambda}$, $\R\oplus\frr'_{3,\lambda}$,
$\frn_4$, $\frr_{4,0}$, $\frd_4$, $\frd'_{4,0}$.
\end{lemma}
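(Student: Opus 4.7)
The proof reduces to a case-by-case computation of the center for each four-dimensional solvable Lie algebra in Theorem \ref{solvable4}. The key observation is dual in nature: if $\{e_1,\ldots,e_4\}$ denotes the basis of $\frr$ dual to $\{e^1,\ldots,e^4\}$, then from the identity $d\alpha(x,y) = -\alpha([x,y])$ one has $x \in \frz(\frr)$ if and only if $\iota_x de^k = 0$ for every $k=1,\ldots,4$. Thus the center coincides with the intersection of the kernels of the linear maps $x \mapsto \iota_x de^k$, and computing it amounts to solving a small linear system which can be read off directly from the structure equations.

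Concretely, I would work through the sixteen families in Theorem \ref{solvable4}. For the algebras appearing in the statement of the lemma the kernel turns out to be non-trivial: $\R^4$ has $de^k = 0$ for all $k$ and its center is the whole space; for $\R\oplus\frh_3$, $\R\oplus\frr_3$, $\R\oplus\frr_{3,\lambda}$, $\R\oplus\frr'_{3,\lambda}$ the generator $e_1$ does not appear in any structure equation and always lies in the center; for $\frn_4$, $\frr_{4,0}$, $\frd_4$ and $\frd'_{4,0}$ the resulting systems cut out a one-dimensional center generated by $e_4$, $e_3$, $e_4$ and $e_4$ respectively.

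For each of the remaining algebras -- namely $\aff(\R)\oplus\aff(\R)$, $\aff(\C)$, $\frr_4$, $\frr_{4,\lambda}$ with $\lambda\neq 0$, $\frr_{4,\mu,\lambda}$, $\frr'_{4,\mu,\lambda}$, $\frd_{4,\lambda}$, $\frd'_{4,\lambda}$ with $\lambda>0$, and $\frh_4$ -- the linear system has only the trivial solution. In the cases involving the complex-type structure equations the only delicate point is to check that the potentially surviving coordinates are killed by combining two equations; for instance, in $\frr'_{4,\mu,\lambda}$ and in $\frd'_{4,\lambda}$ with $\lambda>0$ the combination of the equations for $de^3$ and $de^4$ forces a constraint of the form $(\lambda^2+1)x^i = 0$ on the last free coordinate, which rules out any nonzero solution since $\lambda\in\R$.

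No conceptual obstacle arises; the main piece of care is tracking the parameter ranges that separate the non-trivial-centre cases $\frr_{4,0}$ and $\frd'_{4,0}$ from the remaining algebras in their respective families, and checking that the diagonal-type algebras $\frr_{4,\mu,\lambda}$ immediately have trivial centre because each of $e_2, e_3, e_4$ appears as the only non-vanishing entry of some $de^k$ with nonzero coefficient.
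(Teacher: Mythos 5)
Your proposal is correct and takes essentially the same route as the paper, which offers no written argument beyond invoking the case-by-case computation of the centers; your dual characterization $x\in\frz(\frr)\iff \iota_x\,de^k=0$ for all $k$ is exactly the right way to read the answer off the structure equations, and all of your conclusions (including the generators of the centers) check out. One small inaccuracy: for $\frd'_{4,\lambda}$ the last coordinate $x^4$ is killed by the constraint $2\lambda x^4=0$ coming from the $e^{14}$-term of $de^4$, not by a relation of the form $(\lambda^2+1)x^4=0$ — the latter would wrongly exclude $\frd'_{4,0}$ from having non-trivial center — but this does not affect the stated conclusions.
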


Combining the above lemma with the next result, we get that the case $\frg=\frso(3)\oplus\frr$ never occurs in the main theorem.   
\begin{proposition}\label{nonexistenceso3}
Let $\frs=\frso(3)$ and let $\frr$ be a four-dimensional solvable Lie algebra with $\frz(\frr)=\{0\}$. 
Then, the Lie algebra $\frg=\frso(3)\oplus\frr$ does not admit any closed $\G_2$-structure. 
\end{proposition}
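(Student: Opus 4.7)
The plan is to proceed by a case-by-case analysis on the four-dimensional solvable radical $\frr$. First, I extract from Theorem~\ref{solvable4} and Lemma~\ref{centerrad} the complete list of four-dimensional solvable Lie algebras with trivial center, which are the only admissible candidates for $\frr$ here. For each such $\frr$, I fix a basis $\{e^1,e^2,e^3\}$ of $\frso(3)^*$ realizing \eqref{so3} and a basis $\{e^4,\ldots,e^7\}$ of $\frr^*$ realizing the structure equations given in Theorem~\ref{solvable4}, so that $\frg=\frso(3)\oplus\frr$ acquires explicit structure equations and a well-defined Chevalley--Eilenberg differential.

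Next, I write a generic 3-form $\phi\in\Lambda^3(\frg^*)$ as a linear combination of the $35$ basis monomials $e^{ijk}$ with real coefficients $c_{ijk}$, and impose the closedness condition $d\phi=0$. This produces a linear system whose solution space $Z^3(\frg^*)\subseteq\Lambda^3(\frg^*)$ is an explicit, parametrized family. For each $\phi\in Z^3(\frg^*)$ I then compute the symmetric bilinear map $b_\phi:\frg\times\frg\to\Lambda^7(\frg^*)\cong\R$; identifying the top exterior power with $\R$ via the volume form $e^{1234567}$, this gives a symmetric $7\times 7$ matrix whose entries are homogeneous quadratic polynomials in the free parameters of $\phi$. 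By Lemma~\ref{obstrgen}, it suffices to exhibit, for every such $\phi$, either a nonzero $v\in\frg$ with $b_\phi(v,v)=0$ or two nonzero vectors $v,w\in\frg$ with $b_\phi(v,v)\,b_\phi(w,w)\leq 0$.

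I expect that in each case one of the diagonal entries of this matrix---typically associated with a generator of a distinguished one-dimensional ideal arising from the derived series of $\frr$, or with a direction in the $\frso(3)$-summand---vanishes identically on $Z^3(\frg^*)$, producing the desired null direction. The main obstacle is the bookkeeping: with roughly a dozen candidate radicals and a $35$-parameter ansatz per case, both the closedness system and the resulting quadratic forms are too unwieldy for direct hand computation, which is precisely why the authors invoke Maple with the \emph{difforms} package. A single uniform trick appears unlikely, since the radicals exhibit quite different features (unimodular versus non-unimodular, nilpotent versus completely solvable, purely real versus those carrying a complex structure on $\frr$); however, the absence of a central element in $\frr$ systematically removes the mechanism by which Proposition~\ref{ClosedSympl} could otherwise produce a symplectic form on a six-dimensional quotient, so this obstruction cannot be reduced directly to symplectic non-existence results.
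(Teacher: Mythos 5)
Your overall strategy is the same as the paper's: enumerate the centerless four-dimensional solvable radicals via Theorem~\ref{solvable4} and Lemma~\ref{centerrad}, compute the generic closed $3$-form on $\frso(3)\oplus\frr$ in each case, and apply Lemma~\ref{obstrgen} to the resulting quadratic form $b_\phi$. However, as written this is a plan rather than a proof, and the one concrete prediction you make --- that in each case some diagonal entry of $b_\phi$ vanishes identically on the space of closed $3$-forms --- is false for several of the radicals, so the argument cannot be completed along the lines you sketch. For $\aff(\R)\oplus\aff(\R)$ and $\aff(\C)$ no diagonal entry vanishes identically; instead one finds $b_\phi(e_5,e_5)\,b_\phi(e_7,e_7)\leq0$ (resp.\ $b_\phi(e_6,e_6)\,b_\phi(e_7,e_7)\leq0$) because the two entries are explicit multiples of $\mp\phi_{567}$ times sums of squares. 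The same happens for the unimodular members $\frr_{4,-\frac12}$ and $\frr_{4,\mu,-1-\mu}$ with $-1<\mu\leq-\frac12$. The genuinely delicate case is $\frr=\frr'_{4,\mu,-\frac{\mu}{2}}$ with $\mu>0$: there no single diagonal entry vanishes and no pair has manifestly nonpositive product; one must first observe that $b_\phi(e_5,e_5)=-\mu(\phi_{125}^2+\phi_{135}^2+\phi_{235}^2)\phi_{567}$ forces $\phi_{567}<0$ for definiteness, and then that $b_\phi(e_6,e_6)+b_\phi(e_7,e_7)$ equals $\frac18\mu\,\phi_{567}$ times a sum of squares, hence is $\leq0$, which yields $b_\phi(e_6,e_6)\,b_\phi(e_7,e_7)\leq0$. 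This sign-balancing step is an essential idea of the proof and is absent from your proposal.

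A smaller point: your heuristic that the null direction should come from the derived series of $\frr$ \emph{or from the $\frso(3)$-summand} is off --- all the obstructions live in the radical directions $e_5,e_6,e_7$ (the paper's remark after Proposition~\ref{nonexistencesl2} explains the structural reason: writing $\frr=\R\oplus_\rho\fru$ with $\fru\in\{\R^3,\frh_3\}$ forces $\phi_{i57}=\phi_{i67}=0$ and $\phi_{567}=0$ in the non-unimodular cases, whence $b_\phi(e_7,e_7)=0$). Incorporating that observation would turn your ``expectation'' into an actual argument for most of the non-unimodular radicals, but you would still need the separate treatment of $\aff(\R)\oplus\aff(\R)$, $\aff(\C)$, and the three unimodular centerless families.
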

\begin{proof}
Consider the Lie algebra $\frg=\frso(3)\oplus\frr$, where $\frr$ is a four-dimensional solvable Lie algebra with trivial center. 
Let $\left\{e_1,e_2,e_3\right\}$ be the basis of $\frso(3)$ whose dual basis $\left\{e^1,e^2,e^3\right\}$ defines the structure equations given in \eqref{so3}.  
Similarly, denote by $\left\{e^4,e^5,e^6,e^7\right\}$ the basis of $\frr^*$ for which $\frr$ has the structure equations appearing in Theorem \ref{solvable4}, and let 
$\left\{e_4,e_5,e_6,e_7\right\}$ be the corresponding basis of $\frr$. 
Then, $\left\{e^1,\ldots,e^7\right\}$ is a basis of $\frg^*$. 
The expression of the generic closed 3-form on $\frg$ can be obtained starting with a generic 3-form
\[
\phi = \sum_{1\leq i<j<k\leq7} \phi_{ijk}e^{ijk}\in\Lambda^3(\frg^*),
\]
and solving the linear system in the variables $\phi_{ijk}$ arising from the equation $d\phi=0$ (see Appendix \ref{appendix} for more details). 
Once we have computed such a 3-form, we consider the associated bilinear map $b_\phi:\frg\times\frg\rightarrow\Lambda^7(\frg^*)\cong\R$,  
where the isomorphism $\Lambda^7(\frg^*)\cong\R$ is obtained by fixing the volume form $e^{1234567}$. 
Evaluating $b_\phi$ on the basis vectors, we immediately see that the obstructions given in Lemma \ref{obstrgen} hold in the following cases:
\begin{eqnarray*}
\aff(\R)\oplus\aff(\R):						& &	b_\phi(e_5,e_5)b_\phi(e_7,e_7)\leq0,\\
\aff(\C):								& &	b_\phi(e_6,e_6)b_\phi(e_7,e_7)\leq0,\\
\frr_4:								& &	b_\phi(e_i,e_i)=0,~i=5,6,7,\\
\frr_{4,\lambda},~\lambda\neq0:			& & 	b_\phi(e_i,e_i)=0,~i=5,6,7, \mbox{ when } \lambda\neq-\frac12,\\
									& & 	b_\phi(e_5,e_5)b_\phi(e_6,e_6)\leq0, \mbox{ when } \lambda=-\frac12,\\
\frr_{4,\mu,\lambda}:						& & 	b_\phi(e_i,e_i)=0,~i=5,6,7, \mbox{ when } \lambda+\mu+1\neq0,\\
									& &	b_\phi(e_5,e_5)b_\phi(e_7,e_7)\leq0, \mbox{ when } \lambda=-\mu-1,~-1<\mu\leq-\frac12, \\
\frr'_{4,\mu,\lambda}:						& & 	b_\phi(e_i,e_i)=0,~i=5,6,7,\mbox{ when } \mu+2\lambda\neq0,\\
\frd_{4,\lambda}:						& &	b_\phi(e_7,e_7)=0, \\
\frd'_{4,\lambda},~\lambda>0:				& & 	b_\phi(e_7,e_7)=0,\\				
\frh_4:								& &	b_\phi(e_7,e_7)=0.
\end{eqnarray*}
We are left with the case $\frr=\frr'_{4,\mu,-\frac{\mu}{2}}$ with $\mu>0$. Here, since
\[
b_\phi(e_5,e_5) = -\mu(\phi_{125}^2+\phi_{135}^2+\phi_{235}^2)\phi_{567},
\]
we must have $\phi_{567}<0$. Now, adding $b_\phi(e_6,e_6)$ and $b_\phi(e_7,e_7)$ gives
\[
\frac18 \mu\, \phi_{567} \left( (\mu\, \phi_{127}-2\phi_{347})^2+(\mu\,\phi_{137}+2\phi_{247})^2+4\phi_{127}^2+4\phi_{137}^2
+4\phi_{236}^2+4\phi_{237}^2 \right).
\]
This implies that $b_\phi(e_6,e_6)+b_\phi(e_7,e_7)\leq0$. Hence,  $b_\phi(e_6,e_6)b_\phi(e_7,e_7)\leq 0$. 
\end{proof}

We now consider Lie algebras with trivial Levi decomposition and semisimple part isomorphic to $\frsl(2,\R)$.

\begin{proposition}\label{nonexistencesl2}
The Lie algebra $\frg=\frsl(2,\R)\oplus\frr$ does not have any closed $\G_2$-structure when $\frr$ is centerless and non-unimodular. 
\end{proposition}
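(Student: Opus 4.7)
My plan would follow the case-by-case template laid down in Proposition \ref{nonexistenceso3}, replacing $\frso(3)$ by $\frsl(2,\R)$. First, using Theorem \ref{solvable4} together with Lemma \ref{centerrad} and the unimodular list in Theorem \ref{solvable4}, I would enumerate the four-dimensional solvable Lie algebras $\frr$ that are both centerless and non-unimodular. This leaves the candidates $\aff(\R)\oplus\aff(\R)$, $\aff(\C)$, $\frr_4$, $\frr_{4,\lambda}$ with $\lambda \neq 0,-\tfrac12$, the families $\frr_{4,\mu,\lambda}$ and $\frr'_{4,\mu,\lambda}$ outside their unimodular slices, $\frd_{4,\lambda}$ with $\lambda \geq \tfrac12$, $\frd'_{4,\lambda}$ with $\lambda > 0$, and $\frh_4$.

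For each such $\frr$, I would fix a basis $\{e_1,e_2,e_3\}$ of $\frsl(2,\R)$ dual to the one giving \eqref{sl2} and a basis $\{e_4,\ldots,e_7\}$ of $\frr$ dual to the one in Theorem \ref{solvable4}, so that $\{e^1,\ldots,e^7\}$ is a basis of $\frg^*$. Writing $\phi = \sum_{i<j<k} \phi_{ijk}\,e^{ijk}$ and imposing $d\phi = 0$ yields a linear system in the $\phi_{ijk}$ whose solution parametrizes the closed 3-forms on $\frg = \frsl(2,\R)\oplus \frr$. Identifying $\Lambda^7(\frg^*)$ with $\R$ via $e^{1234567}$, I would then evaluate $b_\phi(e_i,e_i)$ for $i\in\{4,5,6,7\}$ and look, in each case, either for an index with $b_\phi(e_i,e_i)\equiv 0$ on the closed locus, or for a pair of indices with $b_\phi(e_i,e_i)\,b_\phi(e_j,e_j)\leq 0$ identically. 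By Lemma \ref{obstrgen}, either outcome rules out a closed $\G_2$-structure.

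The main obstacle will be the multi-parameter families $\frr_{4,\mu,\lambda}$, $\frr'_{4,\mu,\lambda}$, $\frd_{4,\lambda}$ and $\frd'_{4,\lambda}$, where the diagonal entries $b_\phi(e_i,e_i)$ are polynomial in both the $\phi_{ijk}$ and the structural parameters, and no individual diagonal is manifestly sign-definite across the allowed parameter range. The template from the $\frr'_{4,\mu,-\mu/2}$ case in Proposition \ref{nonexistenceso3} suggests the remedy: combine two diagonals $b_\phi(e_i,e_i)+b_\phi(e_j,e_j)$ and massage the result into a sum of squares multiplied by a common factor $\phi_{abc}$, which then carries the opposite sign of a third diagonal involving the same factor. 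Carrying out this factorization systematically (with the aid of a computer algebra system, as the authors do elsewhere) is the bulk of the labor; once it succeeds in every case, an application of Lemma \ref{obstrgen} finishes the proof. A useful sanity check along the way is that no such closed $\G_2$-structure appears in the Main Theorem, whose non-trivial-Levi, non-semisimple examples are all unimodular.
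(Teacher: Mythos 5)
Your overall strategy is exactly the paper's: the same enumeration of centerless non-unimodular radicals, the same computation of the generic closed $3$-form, and the same use of Lemma \ref{obstrgen} via the values $b_\phi(e_i,e_i)$. However, you have misplaced the difficulty, and the remedy you propose for it does not cover the case that actually resists the diagonal-only test. In the paper, the multi-parameter families $\frr_{4,\mu,\lambda}$, $\frr'_{4,\mu,\lambda}$, $\frd_{4,\lambda}$, $\frd'_{4,\lambda}$ (and $\frr_4$, $\frr_{4,\lambda}$, $\frh_4$) are the \emph{easy} cases: one finds $b_\phi(e_i,e_i)=0$ for $i=5,6,7$, or $b_\phi(e_7,e_7)=0$, directly from the closedness equations. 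The sum-of-two-diagonals factorization you borrow from Proposition \ref{nonexistenceso3} was needed there only for $\frr'_{4,\mu,-\mu/2}$, which is unimodular and therefore does not occur in the present proposition at all.

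The genuinely hard case is $\frr=\aff(\R)\oplus\aff(\R)$, and there your proposed diagnostics --- a vanishing diagonal entry, a pair of diagonal entries with non-positive product, or a sum of two diagonal entries written as a signed sum of squares --- are not what works. The paper instead considers the full restriction of $b_\phi$ to the $2$-plane $\mathrm{span}\{e_6,e_7\}$, including the off-diagonal entry, shows that the determinant of the associated $2\times2$ symmetric matrix is non-positive, and then applies Lemma \ref{obstrgen} to the \emph{eigenvectors} of that matrix: either some eigenvalue vanishes (giving a nonzero $v$ with $b_\phi(v,v)=0$) or the two eigenvalues have opposite signs (giving $w_1,w_2$ with $b_\phi(w_1,w_1)b_\phi(w_2,w_2)<0$). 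Note that Lemma \ref{obstrgen} permits arbitrary vectors $v,w$, not just basis vectors, and this extra freedom is essential here: a non-positive determinant $b_{66}b_{77}-b_{67}^2\leq0$ does not by itself force $b_{66}b_{77}\leq0$. Your plan would stall at this case unless you extend it to test $b_\phi$ on general vectors of a suitable subspace; with that addition it becomes the paper's proof.
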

\begin{proof}
We have to consider Lie algebras of the form $\frg=\frsl(2,\R)\oplus\frr$, where the radical $\frr$ is one of the following: 
$\aff(\R)\oplus\aff(\R)$, $\aff(\C)$, $\frr_4$,  $\frr_{4,\lambda}$ with $\lambda\neq0,-\frac12$, $\frr_{4,\mu,\lambda}$ with $\lambda+\mu+1\neq0$, 
$\frr'_{4,\mu,\lambda}$ with $\mu+2\lambda\neq0$, $\frd_{4,\lambda}$, $\frd'_{4,\lambda}$ with $\lambda>0$, $\frh_4$. 
Fix a basis $\left\{e^1,\ldots,e^7\right\}$ of $\frg^*$ in a similar way as in the proof of Proposition \ref{nonexistenceso3}, 
where $\left\{e^1,e^2,e^3\right\}$ is the basis of $\frsl(2,\R)^*$ for which the structure equations are those given in \eqref{sl2}. 
Let $\phi\in\Lambda^3(\frg^*)$ be a generic closed 3-form on $\frg$, and fix the volume form $e^{1234567}$. 
Then, evaluating the bilinear form $b_\phi$ on the basis vectors of $\frg$, we obtain the following obstructions:
\begin{eqnarray*}
\aff(\C):								& &	b_\phi(e_6,e_6)b_\phi(e_7,e_7)\leq0,\\
\frr_4:								& &	b_\phi(e_i,e_i)=0,~i=5,6,7,\\
\frr_{4,\lambda},~\lambda\neq0:			& & 	b_\phi(e_i,e_i)=0,~i=5,6,7,\mbox{ when } \lambda\neq -\frac12,\\
\frr_{4,\mu,\lambda}:						& & 	b_\phi(e_i,e_i)=0,~i=5,6,7,\mbox{ when } \lambda+\mu+1\neq0,\\
\frr'_{4,\mu,\lambda}:						& & 	b_\phi(e_i,e_i)=0,~i=5,6,7,\mbox{ when } \mu+2\lambda\neq0,\\
\frd_{4,\lambda}:						& & 	b_\phi(e_7,e_7)=0,\\
\frd'_{4,\lambda},~\lambda>0:				& & 	b_\phi(e_7,e_7)=0,\\
\frh_4:								& &	b_\phi(e_7,e_7)=0.
\end{eqnarray*}
We still have to examine the case $\frr=\aff(\R)\oplus\aff(\R)$. Here, we focus on the restriction of $b_\phi$ to the subspace of $\frg$ spanned by $e_6$ and $e_7$. 
The $2\times2$ symmetric matrix associated with this bilinear form has non-positive determinant. 
Hence, denoted by $\lambda_1,\lambda_2$ its real eigenvalues, either at least one of them is zero or $\lambda_1\lambda_2<0$. 
In the first case, the eigenvector $v$ corresponding to the eigenvalue 0 satisfies $b_\phi(v,v)=0$. 
Otherwise, if $w_1$ is an eigenvector with positive eigenvalue and $w_2$ is an eigenvector with negative eigenvalue, we have $b_\phi(w_1,w_1)>0$ and 
 $b_\phi(w_2,w_2)<0$. 
\end{proof}

\begin{remark}
The obstructions to the existence of closed $\G_2$-structures in the proofs of Proposition \ref{nonexistenceso3} and Proposition \ref{nonexistencesl2} 
always involve some basis vectors of the radical $\frr$. 
In particular, for all non-unimodular  centerless Lie algebras but $\aff(\R)\oplus\aff(\R)$ and $\aff(\C)$, we always have $b_{\phi}(e_7,e_7)=0$. The reason is the following. 
By \cite{AnBaDoOv}, in all of the mentioned cases $\frr$ can be written as a semidirect product $\frr=\R\oplus_\rho\fru$, 
where $\fru$ is either $\R^3$ or the three-dimensional Heisenberg algebra $\frh_3$. 
We can then consider a basis $\{e_4,e_5,e_6,e_7\}$ of $\frr$ in such a way that $\R = \langle e_4\rangle$ and $[e_5,e_7]=0=[e_6,e_7]$. 
Consequently, $d\phi=0$ gives $\phi_{i57}=0, \phi_{i67}=0,$ for $i=1,2,3$, and $\phi_{567} = d\phi(e_4,e_5,e_6,e_7) =0$. These conditions imply that $b_\phi(e_7,e_7)=0$. 

The remaining cases do not follow this pattern. In particular, $\aff(\R)\oplus\aff(\R)\cong \R\oplus_\rho\fre(1,1)$, for a suitable derivation $\rho(e_4)$ of the Lie algebra $\fre(1,1)$ 
of the group of rigid motions of the Minkowski 2-space, while $\aff(\C)\cong\R\oplus_\rho\fre(2)$, for a suitable derivation $\rho(e_4)$ of the Lie algebra $\fre(2)$ 
of the group of rigid motions of Euclidean 2-space.  
Finally, $d\phi(e_4,e_5,e_6,e_7)$ is identically zero when $\frr$ is unimodular, thus $\phi_{567}$ need not to vanish in that case. 
\end{remark}

\begin{proposition}\label{existencesl2}
The Lie algebra $\frsl(2,\R)\oplus\frr$ admits closed $\G_2$-structures when $\frr$ is unimodular and centerless.
\end{proposition}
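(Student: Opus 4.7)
The plan is to produce an explicit closed $\G_2$-structure on each Lie algebra of the form $\frsl(2,\R)\oplus\frr$ with $\frr$ four-dimensional, unimodular and centerless. Combining Theorem \ref{solvable4} with Lemma \ref{centerrad}, the only such radicals are $\frr_{4,-\frac12}$, $\frr_{4,\mu,-1-\mu}$ with $-1<\mu\leq-\frac12$, and $\frr'_{4,\mu,-\frac{\mu}{2}}$ with $\mu>0$; these are precisely the radicals appearing in the first three families of the Main Theorem. It therefore suffices to exhibit a valid $\f$ in each of those three cases.

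For each family I would fix a basis $\{e^1,\ldots,e^7\}$ of $\frg^*$ so that $\{e^1,e^2,e^3\}$ satisfies \eqref{sl2} and $\{e^4,e^5,e^6,e^7\}$ gives the structure equations of $\frr$ recorded in Theorem \ref{solvable4}. The condition $d\phi=0$ on a generic 3-form $\phi=\sum_{i<j<k}\phi_{ijk}\,e^{ijk}$ is a linear system in the unknowns $\phi_{ijk}$; solving it parametrizes the affine space of closed 3-forms. Within that space I would specialize the coefficients so that the bilinear map $b_\phi$ is nondegenerate and the normalized form $g_\phi=\det(b_\phi)^{-1/9}\,b_\phi$ is positive definite. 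A natural starting ansatz is a ``block'' 3-form built from an $\frsl(2,\R)$-invariant piece such as a multiple of $e^{123}$, crossed terms with one index in $\{1,2,3\}$ and two in $\{4,5,6,7\}$, and a piece supported entirely on $\frr$; the free parameters are then tuned, possibly as rational functions of $\mu$, to secure positivity.

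Verification in each case reduces to two computations, which can be carried out with Maple and the \emph{difforms} package: first, that $\det(b_\f)$ is a nonzero multiple of the chosen volume form, certifying stability; second, that the symmetric matrix representing $g_\f$ in the fixed basis is positive definite, which I would test via its seven leading principal minors. In the two one-parameter families those minors appear as rational functions of $\mu$ whose positivity must be verified uniformly on the full allowed range of the parameter.

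The hard part is that closedness typically forces many of the $\phi_{ijk}$ to vanish, yet what remains must still yield a positive definite inner product on all of $\frg$; moreover, for the parametric families a single ansatz must work uniformly in $\mu$. I expect to address this by first locating a working $\f$ at a convenient value of $\mu$, then reading off from the shape of the leading minors how the coefficients should depend on $\mu$ so that positivity survives across the full interval. The resulting $\f$ is the desired closed $\G_2$-structure.
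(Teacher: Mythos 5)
Your reduction to the three radicals $\frr_{4,-\frac12}$, $\frr_{4,\mu,-1-\mu}$ ($-1<\mu\leq-\frac12$) and $\frr'_{4,\mu,-\frac{\mu}{2}}$ ($\mu>0$) is exactly right, and your overall strategy --- solve the linear system $d\phi=0$ and then tune the surviving coefficients until the induced bilinear form is positive definite --- is precisely the route the paper takes. The problem is that your write-up stops at the strategy. For an existence statement proved by explicit construction, the proof \emph{is} the witness, and you never actually produce a single closed stable $3$-form, let alone one for each of the three families that works uniformly in $\mu$. Phrases such as ``I would specialize the coefficients'' and ``I expect to address this by first locating a working $\f$ at a convenient value of $\mu$'' describe a search you have not carried out; the positivity of the leading minors over the full parameter range is asserted as something ``whose positivity must be verified,'' not verified. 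As it stands, nothing has been established.

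To close the gap you must do what the paper does: for each of the three radicals, write down an explicit $\f$ (with coefficients depending on $\mu$ in the two parametric families) and record the matrix of $b_\f$ in the basis $\{e_1,\ldots,e_7\}$ relative to the volume form $e^{1234567}$. In the paper's examples this matrix comes out block-diagonal, with a small block involving the $\frsl(2,\R)$ directions together with $e_4$ and a diagonal part on $\langle e_5,e_6,e_7\rangle$, so positive definiteness for every admissible $\mu$ reduces to a short minor computation; since then $\det(b_\f)>0$, the form $\f$ is stable and $g_\f=\det(b_\f)^{-\frac19}b_\f$ is positive definite, i.e.\ $\f$ is a genuine closed $\G_2$-structure. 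Your proposed verification procedure (stability plus positivity of the seven leading principal minors, checked as functions of $\mu$ over the whole allowed interval) is the correct one --- it simply has to be executed on concrete data before the proposition can be considered proved.
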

\begin{proof}
By Theorem \ref{solvable4} and Lemma \ref{centerrad}, 
$\frr$ must be one of the following: $\frr_{4,-\frac12}$, $\frr_{4,\mu,-1-\mu}$ with $-1<\mu\leq-\frac12$, $\frr'_{4,\mu,-\frac{\mu}{2}}$ with $\mu>0$. 
To show the assertion, it is sufficient to give the expression of a closed $\G_2$-structure $\f$ for each case. 
For the sake of clarity, we also write the matrix associated with the bilinear form $b_\f$ with respect to the basis $\{e_1,\ldots,e_7\}$ of $\frsl(2,\R)\oplus\frr$. 
The inner product and the volume form induced by $\f$ are given by $g_\f = \det(b_\f)^{-\frac19}b_\f$ and $ \det(b_\f)^{\frac19}e^{1234567}$, respectively. 
\begin{enumerate}[$\bullet$]
\item {\bf radical} $ \frr=\frr_{4,-\frac12}$:
\[
\f = -e^{147}+2e^{236}+2e^{237}+e^{245}+e^{247}-2e^{125}+4e^{127}-2e^{135}-4e^{137}+e^{146}+e^{347}-e^{345}+e^{567},
\]
\[
b_\f= \left( 
\begin{array}{ccccccc} 
16&-4&-4&0&0&0&0		\\ \noalign{\medskip}
-4&12&-4&-2&0&0&0	\\ \noalign{\medskip}
-4&-4&12&2&0&0&0		\\ \noalign{\medskip}
0&-2&2&2&0&0&0		\\ \noalign{\medskip}
0&0&0&0&4&0&0		\\ \noalign{\medskip}
0&0&0&0&0&2&0		\\ \noalign{\medskip}
0&0&0&0&0&0&6
\end{array} 
\right).
\]
\vspace{10pt}\\
\item {\bf radical} $\frr=\frr_{4,\mu,-1-\mu}$, $-1<\mu\leq-\frac12$:
\[
\f = e^{236}+e^{245}+\frac12(\mu+1)e^{247}+e^{567} + \frac12(\mu+1)e^{347}-2e^{125}+e^{127}-2e^{135}-e^{137}-\mu e^{146}-e^{345},
\]
\[
b_\f =  \left( 
\begin{array}{ccccccc} 	
-4\,\mu&0&0&0&0&0&0		\\ \noalign{\medskip}
0&\mu+2&\mu&0&0&0&0		\\ \noalign{\medskip}
0&\mu&\mu+2&0&0&0&0		\\ \noalign{\medskip}
0&0&0&-{\mu}^{2}-\mu&0&0&0	\\ \noalign{\medskip}
0&0&0&0&4&0&0			\\ \noalign{\medskip}
0&0&0&0&0&-\mu&0			\\ \noalign{\medskip}
0&0&0&0&0&0&1+\mu
\end{array} \right).
\]
\vspace{10pt}\\
\item {\bf radical} $\frr= \frr'_{4,\mu,-\frac{\mu}{2}}$, $\mu>0$:
\begin{eqnarray*}
\f =	 e^{567}-e^{346}-\frac{\mu}{2} e^{347}+\frac{\mu}{2} e^{345}-\frac12 e^{246}-\frac{\mu}{4} e^{247}-\frac{\mu}{4} e^{245}+\sqrt{2}e^{236}-\sqrt{2}e^{147} +2e^{137}\\
	+\frac{\mu}{\sqrt{2}}e^{146}+e^{135}+\frac12e^{125}-e^{127},	
\end{eqnarray*}
\[
b_\f = \mu  \left( 
\begin{array}{ccccccc} 
\sqrt {2}&0&0&0&0&0&0									\\ \noalign{\medskip}
0&\frac{3\sqrt{2}}{8}&-\frac{\sqrt{2}}{4}&\frac{1}{2}&0&0&0			\\ \noalign{\medskip}
0&-\frac{\sqrt{2}}{4}&\frac{3\sqrt{2}}{2}&-1&0&0&0				\\ \noalign{\medskip}
0&\frac{1}{2}&-1&\frac{\sqrt{2}}{8}{\mu}^{2}+\frac{\sqrt{2}}{2}&0&0&0	\\ \noalign{\medskip}
0&0&0&0&\frac{1}{2}&0&0								\\ \noalign{\medskip}
0&0&0&0&0&1&0										\\ \noalign{\medskip}
0&0&0&0&0&0&1
\end{array} \right).
\]
\end{enumerate}

As every connected Lie group admitting a left-invariant Ricci-flat metric is solvable (cf.~\cite[Thm.~1.6]{Mil}), we can conclude that the closed G$_2$-structures described above are not co-closed. 
\end{proof}

\begin{remark}
From the results of \cite{Boc,Has}, we know that the simply connected solvable Lie groups with Lie algebras  
$\frr_{4,\mu,-1-\mu}$ and  $\frr'_{4,\mu,-\frac{\mu}{2}}$ admit a lattice for certain values of the parameter $\mu$.  
Moreover, the Lie group $\SL(2,\R)$ admits a lattice, too (see for instance \cite{Mos}). 
Hence, Proposition \ref{existencesl2} allows to obtain new examples of locally homogeneous compact 7-manifolds endowed with a closed $\G_2$-structure. 
\end{remark}

\section{Unimodular Lie algebras with non-trivial Levi decomposition}\label{SNoTrLevi}
In this last section, we focus on seven-dimensional unimodular Lie algebras with non-trivial Levi decomposition. 
According to the discussion in section $\S$\ref{NonSolv7}, up to isomorphism they are the Lie algebras $L_{7,2}$, $L_{7,3}^{-2}$, $L_{7,6}$, $L_{7,7}$ of Table \ref{tabLnt} 
(cf.~Remark \ref{RemUnimNonTrLevi}), and the Lie algebras of Table \ref{tabLnt2}.

\begin{proposition}\label{UNTN}
The unimodular Lie algebras $L_{7,2}$, $L_{7,6}$, and $L_{7,7}$ do not admit closed $\G_2$-structures.
\end{proposition}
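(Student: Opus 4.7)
The plan is to apply the strategy already used in Propositions \ref{nonexistenceso3} and \ref{nonexistencesl2} to each of the three algebras in turn: compute the generic closed 3-form, assemble the associated bilinear form $b_\phi$, and exhibit an obstruction from Lemma \ref{obstrgen}.

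First, for each $\frg \in \{L_{7,2}, L_{7,6}, L_{7,7}\}$ I would fix the dual basis $\{e^1,\ldots,e^7\}$ of $\frg^*$ prescribed by Table \ref{tabLnt}, write a generic 3-form $\phi = \sum_{1\leq i<j<k\leq 7} \phi_{ijk}\, e^{ijk}$, and solve the linear system in the $35$ coefficients $\phi_{ijk}$ arising from $d\phi = 0$. After fixing the volume form $e^{1234567}$ to identify $\Lambda^7(\frg^*)$ with $\R$, I would assemble the $7\times 7$ symmetric matrix of $b_\phi$ in the dual basis $\{e_1,\ldots,e_7\}$, with entries polynomial in the surviving parameters. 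Given the size of the computation, this step is performed with Maple 18 and the package \emph{difforms}, as noted in the introduction.

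Second, I would inspect the resulting matrix of $b_\phi$ and search for either a non-zero vector $v \in \frg$ with $b_\phi(v,v) = 0$, or a pair of non-zero vectors $v, w \in \frg$ with $b_\phi(v,v)\,b_\phi(w,w) \leq 0$, so as to invoke one of the obstructions in Lemma \ref{obstrgen}. A useful structural observation common to all three algebras is that the radical $\frr = \span\{e_4,\ldots,e_7\}$ is an abelian $\frs$-submodule on which the semisimple part acts non-trivially. The closedness equations then force many coefficients $\phi_{ijk}$ with indices mixing $\frs$ and $\frr$ to be tied together by the $\frs$-module structure, drastically reducing the number of free parameters. In the analogous analyses of Propositions \ref{nonexistenceso3} and \ref{nonexistencesl2} this resulted in diagonal entries $b_\phi(e_i,e_i) = 0$ for specific radical vectors, or in a sign-indefinite $2\times 2$ principal submatrix; I expect an analogous pattern here.

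The main obstacle is the mechanical bulk of the computation, both in solving the closedness system and in extracting the right obstruction from the matrix of $b_\phi$. Unlike the non-unimodular cases handled in the previous section, unimodularity of the radical no longer forces the clean identity $\phi_{567}=0$ noted in the remark after Proposition \ref{nonexistencesl2}, so the offending vector is not expected to be a single basis element of $\frr$ but rather a linear combination of several $e_i$, and the obstruction may only emerge upon inspecting two- or three-dimensional principal minors of $b_\phi$. Isolating the correct combination in each of the three cases, with representations of $\frs$ on $\frr$ that differ substantially (the quaternionic representation of $\frso(3)$ in $L_{7,2}$, the irreducible four-dimensional representation of $\frsl(2,\R)$ in $L_{7,6}$, and two copies of the standard representation in $L_{7,7}$), is what requires the case-by-case analysis.
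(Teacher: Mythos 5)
Your overall strategy coincides with the paper's, and for $L_{7,2}$ and $L_{7,7}$ it works exactly as you predict: the generic closed $3$-form satisfies $b_\phi(e_i,e_i)=0$ for $i=4,5,6,7$, so condition i) of Lemma \ref{obstrgen} applies directly to basis vectors of the radical. The gap is in $L_{7,6}$. There the diagonal entries $b_\phi(e_i,e_i)$ are generically non-zero and no small principal submatrix is manifestly degenerate or indefinite, so the pattern you extrapolate from Propositions \ref{nonexistenceso3} and \ref{nonexistencesl2} (a vanishing diagonal entry or a sign-indefinite $2\times 2$ block) does not materialize, and your plan stalls precisely at the point where you must ``extract the right obstruction''.

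What is actually needed for $L_{7,6}$ is a signature argument on the full $4\times 4$ restriction $B$ of $b_\phi$ to the radical $\frr=\langle e_4,e_5,e_6,e_7\rangle$. Since $\frr$ is the irreducible four-dimensional $\frsl(2,\R)$-module, the closedness equations tie all entries of $B$ to four cubic polynomials $b_1,\dots,b_4$ in five surviving coefficients of $\phi$, and the key computational fact is that $\det(B)$ equals $b_1^{-4}$ times the \emph{square} of $27b_1^2b_4^2-18b_1b_2b_3b_4+4b_1b_3^3+4b_2^3b_4-b_2^2b_3^2$, hence is non-negative; so when $B$ is non-singular its signature is $(4,0)$, $(0,4)$ or $(2,2)$. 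Definiteness is then excluded by running Sylvester's criterion: the resulting system reduces to a quadratic inequality in $b_4$ whose discriminant is $-16(3b_1b_3-b_2^2)^3<0$, so it has no real solutions. Only after this does Lemma \ref{obstrgen} apply (signature $(2,2)$ yields $v,w$ with $b_\phi(v,v)\,b_\phi(w,w)<0$, and the singular case yields an isotropic vector). This argument, in particular the observation that $\det(B)$ is a perfect square, is the essential content of the proposition and is missing from your proposal.
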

\begin{proof}
Let us consider the structure equations of $L_{7,2}$, $L_{7,6}$ and $L_{7,7}$ given in Table \ref{tabLnt}. 
As in the proof of Proposition \ref{nonexistenceso3}, we first compute the expression of the generic closed 3-form $\phi$, and we fix the volume form $e^{1234567}$. 
A straightforward computation shows that for both $L_{7,2}$ and $L_{7,7}$ it holds $b_\phi(e_i,e_i)=0$ for $i=4,5,6,7$. 

To show our assertion for $L_{7,6}$, we prove that the restriction of the bilinear form $b_\phi$ to the radical $\frr=\langle e_4,e_5,e_6,e_7\rangle$ 
is neither positive nor negative definite. We assume that $b_\phi(e_i,e_i)\neq 0$, $i=1,\ldots,7$, otherwise we would get a contradiction by Lemma \ref{obstrgen}. 
Now, the matrix $B$ associated with $\left.b_\phi\right|_{\frr\times\frr}$ with respect to the basis $\{e_4,e_5,e_6,e_7\}$ has the following expression 
\[
B = 
\renewcommand\arraystretch{1.6}
\left( 
\begin {array}{cccc} 
b_{1}	&	b_{2}	&	b_{3}	&	b_{4}	\\ \noalign{\medskip}
b_{2}	&	3\,b_{3}	&	9\,b_{4}	&	\frac{4\,b_{4}b_{2}-b_{3}^{2}}{b_{1}}	\\ \noalign{\medskip}
b_{3}	&	9\,b_{4}	&	3\,{\frac{4\,b_{4}b_{2}-b_{3}^{2}}{b_{1}}}		
		&	-{\frac{2\,b_{1}b_{3}b_{4}-4\,b_{2}^{2}b_{4}+b_{3}^{2}b_{2}}{b_{1}^{2}}}\\ \noalign{\medskip}
b_{4}	&	\frac{4\,b_{4}b_{2}-b_{3}^{2}}{b_{1}}
		&	-{\frac{2\,b_{1}b_{3}b_{4}-4\,b_{2}^{2}b_{4}+b_{3}^{2}b_{2}}{b_{1}^{2}}}
		&	-{\frac {8\,b_{1}b_{4}^{2}-4\,b_{2}b_{3}b_{4}+b_{3}^{3}}{b_{1}^{2}}}
\end {array} 
\renewcommand\arraystretch{1}
\right),
\]
where the $b_i$ are homogeneous polynomials of degree 3 in the variables $\phi_{246}$, $\phi_{247}$, $\phi_{346}$, $\phi_{347}$, $\phi_{357}$.
The determinant of $B$ is non-negative:
\[
\det(B) = \frac{1}{b_{1}^{4}}\left( 27\,b_{1}^{2}b_{4}^{2}-18\,b_{1}b_{2}b_{3}b_{4}+4\,b_{1}b_{3}^{3}+4\,b_{2}^{3}b_{4}-b_{2}^{2}b_{3}^{2} \right) ^{2}.
\]
Hence, if $B$ is non-singular, its possible signatures are $(4,0)$, $(0,4)$, or $(2,2)$. We show that only the last case occurs. 

Let us begin considering $b_1>0$. 
Then, $B$ cannot be negative definite, and all its diagonal entries must be positive.  
Thus, we have the inequalities
\begin{eqnarray}
b_3	&>&	0,\label{B22}\\
4\,b_{4}b_{2}-b_{3}^{2} &>&	0, \label{B33}
\end{eqnarray}
together with $8\,b_{1}b_{4}^{2}-4\,b_{2}b_{3}b_{4}+b_{3}^{3} <0$. 
By Sylvester's criterion, the matrix $B$ is positive definite if and only if its leading principal minors are positive. This gives the following further conditions
\begin{eqnarray}
3\,b_1b_3-b_2^2	&>&	0,\label{M22}\\
27\,b_{1}^{2}b_{4}^{2}-18\,b_{1}b_{2}b_{3}b_{4}+4\,b_{1}b_{3}^{3}+4\,b_{2}^{3}b_{4}-b_{2}^{2}b_{3}^{2}&<&0.\label{M33}
\end{eqnarray}
Assume that $b_1>0$ and $b_3>0$ are given. Equation \eqref{B33} implies $b_2\,b_4>0$, while equation \eqref{M22} gives
$-\sqrt{3\,b_1b_3} < b_2 <\sqrt{3\,b_1b_3}$. 
Under these constraints on $b_1$, $b_2$ and $b_3$, \eqref{M33} becomes a quadratic inequality in the variable $b_4$. 
Since its discriminant is $-16(3\,b_1b_3-b_2^2)^3$, it does not admit real solutions. Hence, the signature of $B$ is $(2,2)$. 

When $b_1<0$, a similar argument leads to the same result. 
Moreover, if we fix the volume form $-e^{1234567}$ instead of $e^{1234567}$, then the signature of $B$ does not change. This completes the proof. 
\end{proof}

\begin{proposition}\label{UNTY}
The unimodular Lie algebra $L_{7,3}^{-2}$ admits closed $\G_2$-structures.
\end{proposition}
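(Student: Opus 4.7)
The plan is to prove existence by exhibiting an explicit closed $\G_2$-structure on $L_{7,3}^{-2}$ together with its associated bilinear form $b_\f$, and then verifying positive-definiteness directly. This mirrors the strategy used in Proposition \ref{existencesl2}: since the claim is purely existential, one ansatz plus one computation suffices.

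First, I would compute the space of closed 3-forms. Writing a generic 3-form $\phi = \sum_{i<j<k} \phi_{ijk}\, e^{ijk}\in\Lambda^3(\frg^*)$ and imposing $d\phi=0$ gives a linear system in the $\binom{7}{3}=35$ unknowns $\phi_{ijk}$; expanding via the Chevalley-Eilenberg differential with the structure equations of $L_{7,3}^{-2}$ produces the defining relations, and solving leaves a family of closed 3-forms parameterized by the free coefficients. Since $e^7$ is closed while $e^4,e^5,e^6$ are not, the interaction between the $\frsl(2,\R)$-part (spanned by $e_1,e_2,e_3$) and the radical $\frr=\langle e_4,e_5,e_6,e_7\rangle$ has exactly the structure flagged in the remark after Proposition \ref{nonexistencesl2}: because $\frr$ is unimodular, the coefficient $\phi_{567}$ is allowed to be nonzero, which is crucial for avoiding the obstruction $b_\phi(e_7,e_7)=0$.

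Second, within this closed-3-form family I would search for coefficients producing a positive-definite $b_\phi$. Guided by the ansätze appearing in Proposition \ref{existencesl2} and by the $\frsl(2,\R)$-equivariance intrinsic to the Levi splitting, I expect a working candidate to involve a combination of monomials of type $e^{1jk}, e^{2jk}, e^{3jk}$ with $j,k\in\{4,5,6,7\}$ that couples the semisimple and solvable parts, together with the pure-radical term $e^{567}$ and a few extra coupling terms needed to make the $\frsl(2,\R)$-block of $b_\phi$ definite. Once such a candidate $\f$ is found, I would display it, tabulate the $7\times 7$ symmetric matrix of $b_\f$ in the basis $\{e_1,\ldots,e_7\}$, and verify positive-definiteness either by producing a diagonal form or by checking Sylvester's criterion on the leading principal minors. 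The induced metric $g_\f = \det(b_\f)^{-1/9}\, b_\f$ and volume form $dV_\f = \det(b_\f)^{1/9}\, e^{1234567}$ then certify that $\f$ is a genuine closed $\G_2$-structure; non-co-closedness follows automatically from \cite[Thm.~1.6]{Mil}, exactly as in Proposition \ref{existencesl2}, since $\frg$ is non-solvable.

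The main obstacle is step two: even after the linear constraints $d\phi=0$ cut the 35-dimensional space of 3-forms down substantially, the remaining family is still high-dimensional, and generic closed 3-forms either fail to be stable or have indefinite $b_\phi$. The practical key is to exploit the $\frsl(2,\R)$-equivariance of the construction, to extrapolate the pattern of successful ansätze from Proposition \ref{existencesl2} (where $\frr$ is a four-dimensional \emph{solvable} unimodular ideal, just as in the present non-trivial-Levi case), and to take special care of the restriction of $b_\f$ to the radical $\langle e_4,e_5,e_6,e_7\rangle$, since this is precisely where the obstructions in the analogous non-unimodular proofs were located.
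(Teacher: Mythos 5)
Your strategy is exactly the one the paper uses: since the claim is purely existential, one writes down an explicit closed stable $3$-form on $L_{7,3}^{-2}$, computes the symmetric matrix of $b_\f$ in the basis $\{e_1,\ldots,e_7\}$, checks positive-definiteness, and invokes \cite[Thm.~1.6]{Mil} for non-co-closedness. Your observations about why the unimodularity of the radical leaves room for $\phi_{567}\neq 0$, and about where the danger lies (the restriction of $b_\f$ to $\langle e_4,\ldots,e_7\rangle$), are all sound and consistent with the remark following Proposition \ref{nonexistencesl2}.

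However, as written the proposal has a genuine gap: for an existence statement whose proof consists of exhibiting a witness, the witness itself is the entire mathematical content, and you never produce it. Everything after ``once such a candidate $\f$ is found'' is conditional on a search you have not carried out, so nothing is actually verified --- in particular you cannot rule out at this stage that the closed-form family on $L_{7,3}^{-2}$ contains no stable form with definite $b_\phi$ (which is precisely what happens for $L_{7,2}$, $L_{7,6}$ and $L_{7,7}$ in Proposition \ref{UNTN}, all of which are also unimodular with non-trivial Levi decomposition). The paper closes this gap by displaying
\[
\f = e^{157}-e^{235}-e^{267}-3e^{124}-e^{126}+e^{134}-e^{136}-e^{456}+e^{367}+e^{247},
\]
together with the explicit $7\times 7$ matrix of $b_\f$ (block-diagonal up to the couplings in the entries $(1,7)$, $(2,3)$ and $(4,6)$), whose positive-definiteness is immediate from Sylvester's criterion. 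To complete your argument you would need to carry out the search you describe and record a concrete such $\f$ with its matrix.
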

\begin{proof}
A closed $\G_2$-structure is given for instance by the stable 3-form
\[
\f = e^{157}-e^{235}-e^{267}-3e^{124}-e^{126}+e^{134}-e^{136}-e^{456}+e^{367}+e^{247}.
\]
The matrix associated with the corresponding bilinear form $b_\f$ with respect to the basis $\{e_1,\ldots,e_7\}$ of $L_{7,3}^{-2}$ and the volume form $e^{1234567}$ is 
\[
\left( 
\begin {array}{ccccccc} 
4&0&0&0&0&0&-\frac32	\\ \noalign{\medskip}
0&4&-\frac32&0&0&0&0	\\ \noalign{\medskip}
0&-\frac32&1&0&0&0&0	\\ \noalign{\medskip}
0&0&0&1&0&\frac12&0	\\ \noalign{\medskip}
0&0&0&0&1&0&0		\\ \noalign{\medskip}
0&0&0&\frac12&0&2&0	\\ \noalign{\medskip}
-\frac32&0&0&0&0&0&1
\end {array} \right).
\] 
As $L_{7,3}^{-2}$ is not solvable, $\f$ is closed but not co-closed (cf.~\cite[Thm.~1.6]{Mil}). 
\end{proof}

Notice that all of the Lie algebras appearing in Table \ref{tabLnt2} have non-trivial center. Hence, they cannot admit left-invariant closed $\G_2$-structures by 
Proposition \ref{ClosedSympl} and the following result. 
\begin{theorem}[\cite{Chu,LicMed}]
A unimodular Lie algebra admitting a symplectic structure is solvable. 
\end{theorem}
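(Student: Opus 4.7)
The plan is to prove the theorem via Chevalley--Eilenberg cohomology, combining unimodularity with the structure of Levi-decomposable Lie algebras. Let $(\frg,\omega)$ be a unimodular Lie algebra of even dimension $2n$ equipped with a symplectic form $\omega$, and suppose for contradiction that $\frg$ is not solvable; write the Levi decomposition $\frg=\frs\oplus_\rho\frr$ with $\frs\neq\{0\}$ semisimple. The first step is a cohomological reformulation: for any unimodular Lie algebra of dimension $m$, Poincar\'e duality for Chevalley--Eilenberg cohomology yields $H^m(\frg,\R)\cong\R$, so the non-zero top form $\omega^n$ represents a non-trivial class in $H^{2n}(\frg,\R)$. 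Equivalently, the cup-power $[\omega]^n\in H^{2n}(\frg,\R)$ does not vanish, and in particular $[\omega]\in H^2(\frg,\R)$ is not nilpotent of order $\leq n$.

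Next, I would invoke Koszul's classical theorem: since $\frs$ is semisimple, every finite-dimensional $\frs$-module is completely reducible, so the Hochschild--Serre spectral sequence associated to the ideal $\frr\trianglelefteq\frg$ degenerates at $E_2$ and produces an isomorphism of graded commutative algebras
\[
H^*(\frg,\R)\;\cong\;H^*(\frs,\R)\otimes H^*(\frr,\R)^{\frs},
\]
where the superscript denotes $\frs$-invariants under the induced action on $H^*(\frr,\R)$. Whitehead's first and second lemmas then give $H^1(\frs,\R)=H^2(\frs,\R)=0$, so that $H^2(\frg,\R)\cong H^2(\frr,\R)^\frs$ and the class $[\omega]$ is inflated from an $\frs$-invariant degree-$2$ cohomology class on the radical $\frr$.

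The final step exploits the multiplicative structure of the Koszul isomorphism: the cup-power $[\omega]^n$ must lie in the factor $H^{2n}(\frr,\R)^\frs$. Since $\dim\frr=2n-\dim\frs<2n$, the space $H^{2n}(\frr,\R)$ vanishes, so $[\omega]^n=0$, contradicting the non-vanishing from step one and forcing $\frs=\{0\}$. The main obstacle is ensuring the Koszul decomposition is used as an isomorphism of graded \emph{algebras}, not merely of graded vector spaces: the $E_2$-degeneration of the Hochschild--Serre spectral sequence is standard when the quotient $\frg/\frr\cong\frs$ is semisimple, but the compatibility with cup products---required to confine $[\omega]^n$ to the $H^*(\frr,\R)^\frs$ factor---demands a careful verification that the identifications at the spectral-sequence level respect the ring structure. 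Once this structural fact is in place, the dimension count $\dim\frr<2n$ delivers the contradiction immediately.
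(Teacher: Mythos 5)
Your statement is one the paper quotes from Chu and Lichnerowicz--Medina without proof, so there is no internal argument to measure yours against; the closest thing in the paper is the elementary Killing-form argument given for the companion fact that semisimple Lie algebras carry no symplectic form, which is a completely different technique. Your cohomological strategy is the classical route to the unimodular statement, and most of its skeleton is sound: unimodularity is exactly the condition that no top-degree form is exact, so $[\omega]^n=[\omega^n]\neq 0$ in $H^{2n}(\frg,\R)$; Whitehead's lemmas together with the Hochschild--Serre sequence for the ideal $\frr$ give $H^2(\frg,\R)\cong H^2(\frr,\R)^{\frs}$; and the desired contradiction should come from $\dim\frr<2n$.

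The step you flag as the ``main obstacle'' is, however, a genuine gap, and the repair you sketch --- verifying that the spectral-sequence identifications respect cup products --- does not close it. Multiplicativity of the Hochschild--Serre spectral sequence only controls the product on the associated graded algebra $E_\infty=\mathrm{gr}\,H^*(\frg)$. From $[\omega]\in F^0H^2$ with image in $E_\infty^{0,2}$ you only learn that $[\omega]^n$ dies in $E_\infty^{0,2n}=H^{2n}(\frr,\R)^{\frs}=0$, i.e.\ that $[\omega]^n\in F^1H^{2n}$. This is vacuous: since $E_\infty^{p,2n-p}=H^p(\frs,\R)\otimes H^{2n-p}(\frr,\R)^{\frs}$ vanishes for every $p<\dim\frs$, the filtration on $H^{2n}(\frg,\R)$ collapses to $F^1H^{2n}=\cdots=F^{\dim\frs}H^{2n}=H^{2n}(\frg,\R)$, so nothing prevents $[\omega]^n$ from being the nonzero class sitting in the deepest filtration step; you are facing a multiplicative extension problem, not a formality check. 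The clean way out is to work at the cochain level. The Levi splitting gives $\Lambda^2(\frg^*)=\Lambda^2(\frs^*)\oplus\bigl(\frs^*\wedge\frr^*\bigr)\oplus\Lambda^2(\frr^*)$, and the Chevalley--Eilenberg differential is the total differential of the corresponding double complex (one piece raises $\frs$-degree, the other raises $\frr$-degree). Using $Z^2(\frs,\R)=B^2(\frs,\R)$ and $Z^1(\frs,\frr^*)=B^1(\frs,\frr^*)$ --- Whitehead's lemmas applied at the level of cocycles --- you can subtract exact forms from $\omega$ so as to kill first its $\Lambda^2(\frs^*)$ component and then its $\frs^*\wedge\frr^*$ component. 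The resulting representative $\tilde\omega$ of $[\omega]$ lies in $\Lambda^2(\frr^*)$, hence $\tilde\omega^n\in\Lambda^{2n}(\frr^*)=0$ identically because $\dim\frr<2n$, so $[\omega]^n=[\tilde\omega^n]=0$, contradicting the first step. With this insertion your proof is complete and correct; without it, the assertion that $[\omega]^n$ ``must lie in the factor $H^{2n}(\frr,\R)^{\frs}$'' is exactly the point at issue.
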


\noindent  {\bf Acknowledgements.} 
The authors would like to thank Fabio Podest\`a for useful conversations, and the two anonymous referees for their valuable comments. 
This work was done when A.~R.~was a postdoctoral fellow at the Department of Mathematics and Computer Science ``U.~Dini'' of the Universit\`a degli Studi di Firenze. 

\appendix

\section{}\label{appendix}

In this appendix, we give some details on the computations we did to prove propositions \ref{nonexistenceso3}, \ref{nonexistencesl2}, \ref{existencesl2}, \ref{UNTN}, \ref{UNTY}. 
We focus on the case $\frg = \mathfrak{so}(3)\oplus\aff(\R)\oplus\aff(\R)$, as in the remaining cases one can proceed similarly. 

Let $\{e_1,\ldots,e_7\}$ be the basis of $\frg=\mathfrak{so}(3)\oplus\aff(\R)\oplus\aff(\R)$ described in the proof of Proposition \ref{nonexistenceso3}. 
Then, the structure equations of $\frg$ with respect to the dual basis $\{e^1,\ldots,e^7\}$ are the following:
\[
\left(-e^{23},e^{13},-e^{12},0,-e^{45},0,-e^{67}\right).
\]

Let us consider a generic 3-form $\phi = \sum_{1\leq i<j<k\leq7} \phi_{ijk}e^{ijk}\in\Lambda^3(\frg^*)$, where $\phi_{ijk}\in\R$. 
The condition $d\phi=0$ is equivalent to the following system of linear equations in the variables $\{\phi_{ijk}\}$: 
\[
\left\{
\begin{array}{lc}
\phi_{i46}=0,~\phi_{i47}=0,~\phi_{i56}=0,~\phi_{i57}=0, &\quad i=1,2,3,\\ 
\phi_{125}+\phi_{345}=0,~\phi_{127}+\phi_{367}=0,& \\
\phi_{135}-\phi_{245}=0,~\phi_{137}-\phi_{267}=0, &\\
\phi_{145}+\phi_{235}=0,~\phi_{237}+\phi_{167}=0,& \\
\phi_{567}+\phi_{457}=0.& 
\end{array}
\right.
\]
Solving this system, we obtain the following expression for the generic closed 3-form $\phi$ on $\frg$
\begin{eqnarray*}
\phi	&=& \phi_{123}e^{123} +\phi_{124}e^{124} -\phi_{345}e^{125} +\phi_{126}e^{126} -\phi_{367}e^{127}  +\phi_{134}e^{134} +\phi_{245}e^{135}  \\
	&  &+\phi_{136}e^{136} +\phi_{267}e^{137}  -\phi_{235}e^{145} -\phi_{237}e^{167}+\phi_{234}e^{234} +\phi_{235}e^{235} +\phi_{236}e^{236} \\ 
	&  & +\phi_{237}e^{237}+\phi_{245}e^{245} +\phi_{267} e^{267} +\phi_{345}e^{345} + \phi_{367}e^{367}+\phi_{456}e^{456}-\phi_{567}e^{457}\\
	&  & +\phi_{467}e^{467}+\phi_{567}e^{567}.
\end{eqnarray*}
Using this, we compute $\iota_{e_i}\phi$, $i=1,\ldots,7,$ and we observe that 
\begin{eqnarray*}
b_\phi(e_5,e_5)		&=&	- \phi_{567}\left( {\phi_{235}}^2+{\phi_{245}}^2+{\phi_{345}}^2 \right) e^{1234567}, \\
b_\phi(e_7,e_7)		&=& \phi_{567} \left({\phi_{237}}^2+{\phi_{267}}^2+{\phi_{367}}^2\right) e^{1234567}. 
\end{eqnarray*}
This implies that $b_\phi$ cannot be definite.


\end{document}